\newcommand{\mS}{\mathcal{S}}
\newcommand{\mT}{\mathcal{T}}
\newcommand{\ot}{\otimes}
\newcommand{\ra}{\rightarrow}
\newcommand{\C}{\mathbb{C}}
\newcommand{\B}{B(\mathcal{H})}
\newcommand{\mH}{\mathcal{H}}
\newcommand{\N}{\mathbb{N}}
\newcommand{\mO}{\mathcal{O}}
\newcommand{\mF}{\mathfrak{u}}
\theoremstyle{thmit}
\newtheorem{theorem}{Theorem}[section]
\newtheorem{proposition}[theorem]{Proposition}
\newtheorem{lemma}[theorem]{Lemma}
\newtheorem{corollary}[theorem]{Corollary}
\theoremstyle{thmrm}
\newtheorem{example}[theorem]{Example}
\newtheorem{remarks}[theorem]{Remark}
\title[Embeddings of exact operator systems]{\textbf{Embeddings and $C^*$-envelopes of exact operator systems}}
\date{\today}
\author[Preeti Luthra]{Preeti Luthra}
\address{Department of Mathematics\\ University of Delhi\\ Delhi-110007, INDIA}
\email{maths.preeti@gmail.com}
\author[Ajay Kumar]{Ajay Kumar}
\address{Department of Mathematics\\ University of Delhi\\ Delhi-110007, INDIA}
\email{akumar@maths.du.ac.in}
\keywords{Operator systems, exactness,
  $C^*$-envelopes, Cuntz algebras, tensor products.\\ 
\noindent\textit{Mathematics Subject Classification (2010): Primary
  46L06, 46L07; Secondary 46L05, 47L25}}
\begin{document}
\maketitle

\begin{abstract}
We prove a necessary and sufficient condition for embeddability of an operator system into $\mO_2$. Using Kirchberg's theorems on a tensor product of $\mO_2$ and $\mO_{\infty}$, we establish results on their operator system counterparts $\mS_2$ and $\mS_{\infty}$. Applications of the results proved, including some examples describing $C^*$-envelopes of operator systems, are also discussed.
\end{abstract}

\section{Introduction}\label{s:1}
The study of operator systems with universal generators generating some well studied $C^*$-envelopes has attracted considerable interest in recent years (see \cite{disgrp, opsysqt, ortiz, zheng}). Da Zheng (\cite{zheng}) introduced the operator system $\mS_n$ generated by Cuntz isometries and later, in \cite{zheng2}, Paulsen and Zheng explored the tensor product and nuclearity related properties of this operator system.
\\
Cuntz introduced the $C^*$-algebras $\mO_n$($1 \leq n \leq \infty$)(see \cite{cuntz}), in the year 1977, which were the first explicit examples of simple infinite separable $C^*$-algebras. Cuntz proved that his algebras are simple and purely infinite, and are independent of the choice of generators.
\\
In fact, these algebras played an important role in the classification theory of purely infinite, simple, separable and nuclear $C^*$-algebras, by Kirchberg and Philips. The classification theory for separable $C^*$-algebras with certain properties in terms of Cuntz algebras $\mO_2$ and $\mO_{\infty}$ was given by Kirchberg and Rordam. One can refer to \cite{rordam} for a detailed discussion on this classification programme.
\\
There are basically three fundamental theorems given by Kirchberg; namely, the embedding of separable exact $C^*$-algebras into Cuntz algebra $\mO_2$, and the tensor product theorems for $\mO_2$ and $\mO_{\infty}$.
Many more generalizations of these results were later proved by Kirchberg and Rordam. In a recent work, Lupini \cite{lupini2}  has established an operator system analog of Kirchberg's nuclear embedding theorem involving the Gurarij operator system $\mathbb{GS}$.
\\
Since, for all $ 1\leq n \leq \infty$, $\mO_n$ is a simple $C^*$-algebra, it turns out that $\mO_n$ is in fact the $C^*$-envelope of $\mS_n$(\cite{zheng}). This motivates us to study the Kirchberg's theorems on $\mO_n$ ($2 \leq n \leq \infty$) in terms of the $C^*$-envelopes of operator system.
\\
After collecting prerequisites in \Cref{s:2}, we prove an embedding theorem for operator systems motivated by Kirchberg's exact embedding theorem in \Cref{s:3}. It gives a necessary and sufficient condition for embedding an operator system into $\mO_2$ in terms of exactness of its $C^*$-envelopes. Further, we could extend these embeddibility conditions to finite \emph{minimal} tensor products of operator systems. We also discuss some nuclearity properties of operator systems embedding in $\mO_2$.
\\
In \Cref{s:4}, results regarding embedding of operator systems of the form $\mS \ot_{\mathrm{min=c}} \mS_2$ into $\mO_2$ are proved. Further, we obtain some equivalent conditions for their $C^*$-envelopes to be $*$-isomorphic either to $\mO_2$ or to a $C^*$-subalgebra of $\mO_2$. We also prove results on operator system of the form $\mS \ot_{\mathrm{min=c}} \mS_{\infty}$ using Kirchberg's theorems on a tensor product of $\mO_{\infty}$.
\\
Finally in \Cref{s:5}, as an application of results proved, we check the embeddability of some operator systems whose $C^*$-envelopes are already calculated, into $\mO_2$. A description of $C^*$-envelopes of some operator systems with tensor product  factor $\mS_2$ or $\mS_{\infty}$ is also given, which adds few more operator systems to the short list of known $C^*$-envelopes.


\section{Preliminaries}\label{s:2}
\subsection{Cuntz algebra and Kirchberg's theorems} 

The \emph{Cuntz algebra \cite{cuntz} $\mathcal{O}_n$}, where $2 \leq n < \infty$, is the universal unital $C^*$-algebra generated by isometries $s_1,s_2,\ldots, s_n$ satisfying $s_1s_1^*+s_2s_2^*+\ldots+ s_ns_n^*=1$. The Cuntz algebra $\mathcal{O}_{\infty}$ is the universal unital $C^*$-algebra generated by an infinite sequence of isometries $s_1,s_2,s_3,\ldots$ with mutually orthogonal range projections $s_js_j^*$ which add up to identity.
\\
A finite set $\{t_j\}_{j=1}^{ n}$ of isometries in a unital $C^*$-algebra $A$ is said to satisfy the \emph{Cuntz relation} if $t_1t_1^*+t_2t_2^*+ \ldots + t_n t_n^*=1$, and a sequence $\{t_j\}_{j=1}^{\infty}$ of isometries satisfies the Cuntz relation if their range projections $\{t_j t_j^*\}_{j=1}^{\infty}$ are mutually orthogonal. Cuntz algebra are well defined in the sense that they are independent of the choice of generating isometries.
\\
A self-contained survey of classical theorems stated below can be found in \cite{rordam}.
\\\\
The class of $C^*$-algebras $\mO_n$ have the following properties, as proved in \cite{cuntz}: 

\begin{theorem}\label{cuntz} For each $n \in \mathbb{N}$ and for $n = \infty$, the Cuntz algebra $\mO_n$ is unital, separable,
simple, nuclear and purely infinite.
\end{theorem}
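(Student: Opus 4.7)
The plan is to establish each of the five properties in sequence, with the first two being immediate and the remaining three interlinked through a common tool, the gauge action. $\mO_n$ is unital by its definition as a universal unital $C^*$-algebra; separability is clear since $*$-polynomials in the finite or countable generating set $\{s_j\}$ form a dense countable $*$-subalgebra.

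For the remaining three properties, I would introduce the gauge action $\alpha \colon \mathbb{T} \to \mathrm{Aut}(\mO_n)$ determined by $\alpha_z(s_j) = z s_j$, together with the associated faithful conditional expectation $E(x) = \int_{\mathbb{T}} \alpha_z(x)\, dz$ onto the fixed-point subalgebra $\mathcal{F}_n := \mO_n^\alpha$. Using multi-index words $s_\mu s_\nu^*$ with $|\mu|=|\nu|$, one identifies $\mathcal{F}_n$ with the UHF algebra of type $n^{\infty}$ when $n < \infty$ and with an AF algebra when $n = \infty$; in either case $\mathcal{F}_n$ is simple and nuclear. Nuclearity of $\mO_n$ then follows by realizing it as a crossed product of $\mathcal{F}_n$ by an amenable system (equivalently, as a Cuntz--Pimsner algebra associated with a finite-rank bimodule) and invoking the permanence of nuclearity under crossed products by amenable groups; alternatively, one can exhibit an explicit approximate factorization through the finite-dimensional matrix subalgebras exhausting $\mathcal{F}_n$, combined with $E$.

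The core calculation, which simultaneously yields simplicity and pure infiniteness, is to show that for every nonzero positive $a \in \mO_n$ there exist $b, c \in \mO_n$ with $bac = 1$. The idea is to use $E$ and an averaging argument over $\alpha$ to replace $a$ by a controllable perturbation of $E(a) \in \mathcal{F}_n$, exploit simplicity of $\mathcal{F}_n$ to scale $E(a)$ up to a projection lying in a finite-dimensional matrix subalgebra, and then invoke the Cuntz relation $\sum_j s_j s_j^* = 1$ (and mutual orthogonality of the range projections in the $n = \infty$ case) to assemble an isometry whose range projection is $1$. Simplicity is immediate, as any ideal containing such an $a$ must contain $1$; the same construction produces an infinite projection in the hereditary subalgebra $\overline{a \mO_n a}$, giving pure infiniteness.

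The main obstacle is precisely this algebraic identity $bac = 1$: isolating the diagonal behaviour of an arbitrary $a$ under the gauge action and then organising the Cuntz relation to produce the unit requires careful multi-index bookkeeping, and this was Cuntz's principal technical contribution in \cite{cuntz}. Once this identity is in hand, the remaining properties follow from it together with standard permanence results.
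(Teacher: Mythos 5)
The paper offers no proof of this statement: it is quoted verbatim from Cuntz's original article as background, so there is nothing internal to compare against. Judged on its own, your sketch is the standard classical argument and is correct in outline: unitality and separability are immediate; the gauge action, the faithful expectation $E$ onto the fixed-point algebra $\mathcal{F}_n$ (UHF of type $n^\infty$ for finite $n$, AF and simple for $n=\infty$), and the key identity $bac=1$ for every nonzero positive $a$ are exactly the ingredients of Cuntz's proof of simplicity, and that identity does simultaneously give pure infiniteness in the sense used in this paper (every nonzero hereditary subalgebra contains an infinite projection). You are also right to flag the reduction of $a$ to a controllable perturbation of $E(a)$ and the multi-index bookkeeping as the genuine technical content; as written your text is a roadmap rather than a proof, since that step is named but not executed. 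One small imprecision worth fixing: $\mO_n$ is not literally a crossed product of $\mathcal{F}_n$ by an amenable \emph{group} (its $K$-theory rules that out); the correct statement is that $\mO_n$ is a crossed product of $\mathcal{F}_n$ by an endomorphism, equivalently a full corner of $(\mathcal{F}_n\otimes\K)\rtimes\mathbb{Z}$, and nuclearity then follows from permanence under crossed products by $\mathbb{Z}$ together with passage to corners --- or, as you note, from the Cuntz--Pimsner picture or a direct approximate factorization through the matrix subalgebras of $\mathcal{F}_n$ using $E$. With that adjustment the outline is sound.
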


Elliot proved an important self-absorbing property of $\mO_2$ in 1993 :

\begin{theorem}\label{elliot} The $C^*$-algebras
$\mathcal{O}_2 \ot_{{C^*\text{-}\min}} \mathcal{O}_2$ and $\mathcal{O}_2$ are isomorphic.  
\end{theorem}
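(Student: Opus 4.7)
The plan is to use Elliott's approximate intertwining technique. The goal is to construct two unital $*$-homomorphisms
\[
\phi : \mO_2 \to \mO_2 \ot_{C^*\text{-}\min} \mO_2, \qquad \psi : \mO_2 \ot_{C^*\text{-}\min} \mO_2 \to \mO_2,
\]
such that $\psi\circ\phi$ is approximately unitarily equivalent to $\mathrm{id}_{\mO_2}$ and $\phi\circ\psi$ is approximately unitarily equivalent to $\mathrm{id}_{\mO_2\ot\mO_2}$. Once one has such an ``approximate two-sided inverse pair'', Elliott's intertwining argument produces, by a standard back-and-forth unitary perturbation on finite subsets in a countable dense sequence, a genuine $*$-isomorphism.

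For $\phi$, the obvious first-factor embedding $\phi(a) = a \ot 1$ works; it is unital and, since $\mO_2$ is simple by \Cref{cuntz}, injective. For $\psi$, I would use Kirchberg's embedding theorem: the tensor product $\mO_2 \ot_{C^*\text{-}\min} \mO_2$ is separable and nuclear (hence exact) by \Cref{cuntz}, so it embeds unitally into $\mO_2$. Both maps are therefore bona fide unital $*$-monomorphisms; the content of the theorem lies entirely in the uniqueness-up-to-approximate-unitary-equivalence statements.

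The core step is an absorption/uniqueness result for $\mO_2$: any two unital $*$-homomorphisms from $\mO_2$ (or from $\mO_2\ot\mO_2$) into $\mO_2$ are approximately unitarily equivalent. The cleanest route is to exploit purely infinite simplicity together with the fact that $\mO_2$ has a unique $KK$-class of unital endomorphisms because $K_0(\mO_2)=K_1(\mO_2)=0$ and there are no traces; combined with Kirchberg's Weyl--von Neumann type theorem for nuclear maps into Kirchberg algebras with trivial $K$-theory, this yields the required approximate unitary equivalence. An alternative, more elementary route is to verify directly that the flip automorphism on $\mO_2 \ot \mO_2$ is approximately inner, which forces $a \ot 1$ and $1\ot a$ to be approximately unitarily equivalent maps $\mO_2 \to \mO_2 \ot \mO_2$, and then use this self-absorbing symmetry to compare $\psi\circ\phi$ with $\mathrm{id}_{\mO_2}$.

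The main obstacle is clearly this approximate unitary uniqueness: constructing $\phi$ is trivial and constructing $\psi$ is a one-line appeal to Kirchberg's embedding theorem, but showing that the compositions are approximately inner requires a genuine classification-type input, whether packaged as a Weyl--von Neumann theorem, a Rohlin-property argument, or approximate innerness of the flip on $\mO_2 \ot \mO_2$. Once this is in hand, assembling the intertwining and verifying the limit map is unital and $*$-preserving is a routine diagram chase.
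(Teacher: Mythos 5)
This statement is quoted in the paper as Elliott's 1993 theorem with no proof given; the argument is delegated to the survey \cite{rordam} (Theorem 5.2.1 there). So the comparison is necessarily with the standard literature proof rather than with anything in this article.

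Your skeleton --- Elliott's approximate intertwining applied to a pair $\phi,\psi$ whose composites are approximately inner --- is the right general shape, but the two key inputs you invoke are logically downstream of the theorem being proved. Kirchberg's exact embedding theorem (\Cref{kirchexact}), which you use to manufacture $\psi$, and the ``Weyl--von Neumann type theorem for nuclear maps into Kirchberg algebras,'' which you use for the uniqueness step, are both established \emph{using} $\mO_2 \ot_{C^*\text{-}\min} \mO_2 \cong \mO_2$ and the $\mO_2$-absorption machinery built on it; citing them here is circular. The non-circular construction of a unital $*$-homomorphism $\mO_2 \ot \mO_2 \to \mO_2$ is elementary (exhibit two commuting unital copies of $\mO_2$ inside $\mO_2$ and invoke the universal property and nuclearity), or one can dispense with $\psi$ altogether by using the one-sided intertwining. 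Separately, even granting a uniqueness theorem for unital $*$-homomorphisms out of $\mO_2$, your claim $\phi\circ\psi \approx_u \mathrm{id}_{\mO_2\ot\mO_2}$ needs uniqueness for unital endomorphisms of $\mO_2\ot\mO_2$, which does not follow formally: the difficulty is to conjugate $1\ot b$ close to $\mO_2\ot 1$ while approximately fixing $a\ot 1$ on a prescribed finite set, and ``use this self-absorbing symmetry'' does not close that gap. The actual proof (R{\o}rdam, Theorems 5.1.1 and 5.2.1) first shows by hand --- via the canonical shift endomorphism of $\mO_2$ and quasicentral approximate units, with no classification-theoretic input --- that any two unital $*$-homomorphisms from $\mO_2$ into an arbitrary unital $C^*$-algebra are approximately unitarily equivalent, and then runs the one-sided Elliott intertwining on $a\mapsto a\ot 1$ by verifying that every finite subset of $\mO_2\ot\mO_2$ can be unitarily conjugated $\varepsilon$-close to $\mO_2\ot 1$. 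Your sketch correctly identifies where the content lies, but as written it rests on circular citations and omits the one genuinely hard estimate.
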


In the year 2000, Kirchberg (\cite[Theorem 2.8]{kirchbergcuntz}) characterized separable exact $C^*$-algebras through the following result:

\begin{theorem}\label{kirchexact}
A unital separable $C^*$-algebra $A$ is exact if and only if it admits a unital embedding
into $\mO_2$.
\end{theorem}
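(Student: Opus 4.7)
The plan is to prove the two implications separately, with the forward direction being substantially deeper.

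For the converse direction (embedding implies exactness): By \Cref{cuntz}, $\mO_2$ is nuclear and therefore exact. Exactness passes to unital $C^*$-subalgebras, since the defining property that $-\otimes_{\min}B$ preserves short exact sequences is inherited by any subalgebra of an exact algebra. Hence any unital $C^*$-subalgebra of $\mO_2$ is exact, and in particular so is $A$.

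For the forward direction I would proceed in two stages. \emph{Stage one}: embed the separable exact unital algebra $A$ into a separable unital nuclear $C^*$-algebra $N$. This is the Kirchberg--Wassermann characterization of exactness, and it reduces the problem to showing that every separable unital nuclear $C^*$-algebra admits a unital embedding into $\mO_2$. \emph{Stage two}: for $N$ separable unital nuclear, produce a unital embedding $N\hookrightarrow\mO_2$. Here I would combine \Cref{elliot}, namely $\mO_2\otimes_{\min}\mO_2\cong\mO_2$, with Elliott's approximate intertwining argument and Kirchberg's uniqueness theorem: any two unital nuclear $*$-homomorphisms from a separable unital $C^*$-algebra into $\mO_2$ are approximately unitarily equivalent. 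Coupled with $\mO_2$-absorption (that $N\otimes_{\min}\mO_2$ admits a unital embedding into $\mO_2$ whenever $N$ is separable unital nuclear, a fact leveraged from the simplicity and pure infiniteness of $\mO_2$), this gives the desired embedding as $a\mapsto a\otimes 1$ followed by the absorption inclusion $N\otimes_{\min}\mO_2\hookrightarrow\mO_2$.

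The hardest step is precisely the uniqueness / absorption package used in stage two. Establishing that unital nuclear $*$-homomorphisms into $\mO_2$ are unique up to approximate unitary equivalence requires ultrapower methods in $\mO_2$ and careful use of its pure infiniteness together with the $K$-theoretic triviality $K_0(\mO_2)=K_1(\mO_2)=0$. This uniqueness fact is the technical heart of the Kirchberg--Phillips classification and lies well beyond the tools recalled in \Cref{s:2}; in a self-contained presentation it would occupy most of the argument, whereas the reduction to it via stages one and two is comparatively routine.
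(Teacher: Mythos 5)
This statement is not proved in the paper at all: it is Kirchberg's exact embedding theorem, recorded verbatim in the preliminaries (Section \ref{s:2}) with a citation to Kirchberg's work, so there is no in-paper argument to compare yours against. Your converse direction is correct and standard: $\mO_2$ is nuclear (Theorem \ref{cuntz}), hence exact, and exactness passes to $C^*$-subalgebras (Theorem \ref{propexact}(i)).

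Your forward direction, however, contains a genuine gap at stage one. The Kirchberg--Wassermann characterization of exactness is \emph{nuclear embeddability of the inclusion map} $A\subset B(H)$, i.e.\ the inclusion approximately factors through matrix algebras by unital completely positive maps in point-norm; it is \emph{not} the assertion that $A$ embeds into a nuclear $C^*$-algebra. For separable $A$ the latter statement is known only as a corollary of the $\mO_2$-embedding theorem itself (since $\mO_2$ is nuclear), so taking it as an input makes the argument circular. The non-circular reduction available is Kirchberg's theorem that a separable $C^*$-algebra is exact iff it is a subquotient (quotient of a subalgebra) of the CAR algebra, after which one must still produce an embedding of the \emph{quotient} into $\mO_2$; this is where the crossed-product and corona-algebra technology of Kirchberg--Phillips enters, and it is not subsumed by your stage two. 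Stage two also quietly needs simplicity: the absorption $N\ot_{C^*\text{-}\min}\mO_2\cong\mO_2$ (Theorem \ref{kirch2}) holds only for \emph{simple} separable unital nuclear $N$, so the ``absorption inclusion'' you compose with $a\mapsto a\ot 1$ does not exist in the stated generality without a further one-sided approximate intertwining argument built on the uniqueness theorem. Your assessment that the uniqueness theorem is the technical heart is fair, but as written the reduction to it does not close.
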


A complete characterization of $C^*$-algebras $A$ for which ${A} \ot_{C^*\text{-}\min} \mathcal{O}_2 \cong \mathcal{O}_2$ was also given by  Kirchberg(\cite[Theorem 3.7]{kirchbergcuntz}).

\begin{theorem}\label{kirch2}
The tensor product $A \ot_{C^*\text{-}\min} \mathcal{O}_2$ is isomorphic to $\mathcal{O}_2$ if and only if A is unital, simple,
separable and nuclear.
\end{theorem}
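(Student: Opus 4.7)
The plan is to split the biconditional and argue each direction with different machinery: the necessity direction descends structural properties from $\mathcal{O}_2$ through the tensor product, while sufficiency requires the Kirchberg--Phillips classification of purely infinite, simple, nuclear, separable $C^*$-algebras.

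For $(\Rightarrow)$, assume $A \otimes_{C^*\text{-}\min} \mathcal{O}_2 \cong \mathcal{O}_2$. Unitality and separability pass to $A$ because a minimal tensor product is unital (respectively separable) only when both factors are. Simplicity of $A$ follows from exactness of $\mathcal{O}_2$: any closed two-sided ideal $J \triangleleft A$ induces an ideal $J \otimes_{\min} \mathcal{O}_2$ of the simple algebra $\mathcal{O}_2$, so $J \in \{0, A\}$. Nuclearity is the delicate part: $A$ embeds unitally into $\mathcal{O}_2$ via $a \mapsto a \otimes 1$, giving exactness through \Cref{kirchexact}, after which one promotes exactness to nuclearity using the combination of simplicity of $A$ with the $\mathcal{O}_2$-absorbing tensor structure (via a slice-map argument).

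For $(\Leftarrow)$, suppose $A$ is unital, simple, separable, and nuclear, and set $B = A \otimes_{\min} \mathcal{O}_2$. The first step is to verify that $B$ is a unital Kirchberg algebra: unitality, separability, and nuclearity are inherited from the factors; simplicity follows from the standard fact that the minimal tensor product of simple $C^*$-algebras is simple whenever one factor is nuclear; and pure infiniteness is inherited from $\mathcal{O}_2$ (\Cref{cuntz}) since tensoring any simple unital $C^*$-algebra with a purely infinite simple unital one yields a purely infinite simple algebra. The second step is to compute $K_*(B) = 0$: Elliott's theorem (\Cref{elliot}) gives $B \otimes \mathcal{O}_2 \cong B$, and since $\mathcal{O}_2$ lies in the bootstrap class with trivial $K$-theory, the Künneth formula forces $K_0(B) = K_1(B) = 0$. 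Finally, invoke the Kirchberg--Phillips classification of unital Kirchberg algebras by the pointed triple $(K_0, [1_B], K_1)$; since $\mathcal{O}_2$ is the unique such algebra with trivial $K$-theory, we conclude $B \cong \mathcal{O}_2$.

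The main obstacle is the classification input, which is deep. A self-contained route that circumvents it would construct mutually approximately inverse unital embeddings $B \hookrightarrow \mathcal{O}_2$ (using \Cref{kirchexact} on the nuclear and hence exact $B$) and $\mathcal{O}_2 \hookrightarrow B$ (using $\mathcal{O}_2 \otimes \mathcal{O}_2 \cong \mathcal{O}_2$ from \Cref{elliot}, as $B$ absorbs $\mathcal{O}_2$), and then execute an Elliott-style approximate intertwining argument; convergence is ensured by the unitary path-connectedness available in purely infinite simple unital $C^*$-algebras. This intertwining is the technical heart of Kirchberg's original proof, and packaging it correctly is where the real effort of the theorem resides.
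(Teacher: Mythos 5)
First, note that the paper does not prove this statement at all: it is Kirchberg's $\mO_2$-absorption theorem, quoted as background with the citation \cite[Theorem 3.7]{kirchbergcuntz}, so there is no internal proof to compare yours against. Judged on its own terms, your forward implication is essentially sound: unitality, separability and simplicity descend to $A$ as you say, and nuclearity of $A$ does come from a slice-map argument --- but not by ``promoting exactness.'' The clean statement is that $\mathrm{id}_A \otimes \phi$, for $\phi$ any state on $\mO_2$, is a conditional expectation of the nuclear algebra $A \ot_{C^*\text{-}\min} \mO_2 \cong \mO_2$ onto $A \otimes 1 \cong A$, and nuclearity (via the completely positive approximation property) passes to the range of a conditional expectation; neither exactness nor simplicity of $A$ enters that step, and as written your ``exactness plus simplicity plus absorption implies nuclearity'' is not an argument.

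The backward implication, which is the entire substance of the theorem, is not proved. Invoking the Kirchberg--Phillips classification is circular: that classification is itself established using $\mO_2$-absorption (and $\mO_\infty$-absorption) as input, so it cannot be cited to prove this statement. Even granting it, classification by $K$-theory alone applies to Kirchberg algebras satisfying the UCT, and your hypotheses on $A$ (unital, simple, separable, nuclear) do not place $B = A \ot_{\min} \mO_2$ in the UCT class; to avoid that you would need the $KK$-form of the classification together with the observation that $B$ is $KK$-contractible because $1_{\mO_2} = 0$ in $KK(\mO_2,\mO_2)$ and the exterior product kills $1_B = 1_A \otimes 1_{\mO_2}$ --- none of which you supply. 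Your closing paragraph correctly identifies the genuine route (mutually approximately inverse unital embeddings $B \hookrightarrow \mO_2$ via \Cref{kirchexact} and $\mO_2 \hookrightarrow B$ via $b \mapsto 1 \otimes b$, followed by an Elliott approximate intertwining driven by uniqueness, up to approximate unitary equivalence, of unital $*$-homomorphisms into $\mO_2$), but you explicitly defer executing the intertwining, and that is where all the content of the theorem lives. As it stands, the hard direction is a gap, not a proof.
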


Kirchberg \cite[Theorem 7.2.6]{kirchbergcuntz} also gave a classification of all separable, nuclear $C^*$-algebras that absorb $\mO_{\infty}$:

\begin{theorem}\label{Oinfinity} For a simple, nuclear, separable $C^*$-algebra $A$, $A \cong A \ot_{C^*\text{-}\min} \mathcal{O}_{\infty} $ if and only if $A$ is purely infinite.
\end{theorem}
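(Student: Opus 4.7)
\bigskip
\noindent\textbf{Proof proposal for \Cref{Oinfinity}.}

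The plan is to prove the two implications separately, the first being essentially formal and the second drawing on the theory of strongly self‑absorbing $C^*$‑algebras together with an Elliott intertwining argument.

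For the easy direction ($\Leftarrow$ requires the hard work, while $\Rightarrow$ is elementary), suppose $A\cong A\ot_{C^*\text{-}\min}\mO_{\infty}$. Since $\mO_{\infty}$ is itself purely infinite by \Cref{cuntz}, one first verifies that the minimal tensor product of a simple, nuclear $C^*$‑algebra with a simple purely infinite $C^*$‑algebra is again simple and purely infinite: given a nonzero $z\in A\ot\mO_{\infty}$, after a slight perturbation one writes $z=\sum a_i\ot b_i$ and uses the purely infinite structure of $\mO_{\infty}$ (elements of the form $s_i^*(\,\cdot\,)s_j$) to reduce the second tensor factor to a scalar, then applies pure infiniteness‑style manipulations on the first tensor factor after invoking simplicity. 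Transporting this through the isomorphism shows $A$ is purely infinite.

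For the hard direction, I would first establish the self‑absorbing property $\mO_{\infty}\ot_{C^*\text{-}\min}\mO_{\infty}\cong \mO_{\infty}$; the idea is that $\mO_\infty$ has a sequence of mutually orthogonal isometries with sum of range projections $1$, and one can split these into two commuting families producing a unital embedding $\mO_\infty\ot\mO_\infty\hookrightarrow \mO_\infty$ which is then promoted to an isomorphism by Kirchberg–Phillips‑style classification of unital embeddings of $\mO_\infty$ up to approximate unitary equivalence. Equivalently, one shows $\mO_{\infty}$ is \emph{strongly self‑absorbing}, i.e.\ the first‑factor embedding $\mO_{\infty}\to \mO_{\infty}\ot\mO_{\infty}$ is approximately unitarily equivalent to an isomorphism.

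Next, the key technical step: construct a unital $*$‑homomorphism $\mO_{\infty}\to A_{\omega}\cap A'$, where $A_{\omega}$ is an ultrapower of $A$. Using the assumed pure infiniteness of $A$, one produces, for any finite set $F\subset A$ and $\ep>0$, a finite family of isometries $\{t_j\}\subset A$ with mutually orthogonal range projections whose images $[t_j,a]$ are within $\ep$ of $0$ for all $a\in F$; separability of $A$ and nuclearity (to ensure completely positive approximations) allow one to assemble these families into Cuntz families in $A_{\omega}\cap A'$ with sums of range projections equal to $1$, yielding the desired map of $\mO_{\infty}$. Tensoring with the identity on $A$ then produces a unital $*$‑homomorphism $A\ot\mO_{\infty}\to A_{\omega}$ whose restriction to $A\ot 1$ is (a representative of) the canonical inclusion.

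Finally, apply an Elliott two‑sided approximate intertwining between the canonical unital inclusion $\iota\colon A\to A\ot\mO_{\infty}$, $a\mapsto a\ot 1$, and a suitable candidate inverse built from the central‑sequence embedding of $\mO_{\infty}$; the self‑absorption of $\mO_{\infty}$ established above ensures both maps become approximately unitarily equivalent to isomorphisms, so the intertwining closes up to give $A\cong A\ot \mO_{\infty}$. The main obstacle, and the step that truly requires Kirchberg's deep machinery, is the construction of the central‑sequence embedding of $\mO_{\infty}$: pure infiniteness delivers enough isometries, but achieving asymptotic centrality and the exact Cuntz relation simultaneously is where nuclearity and separability of $A$ are indispensable.
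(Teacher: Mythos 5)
First, a point of orientation: the paper does not prove this statement at all. It is \Cref{Oinfinity}, quoted as Kirchberg's $\mO_{\infty}$-absorption theorem (cited to \cite[Theorem 7.2.6]{rordam}/Kirchberg) and used as a black box in \Cref{prop2} and its corollaries. So there is no in-paper argument to compare against; your proposal has to be judged as an attempted proof of one of Kirchberg's ``Geneva theorems'' from scratch. Your architecture is the standard modern one (pure infiniteness of $A\ot\mO_\infty$ for the forward direction; strong self-absorption of $\mO_\infty$ plus a central-sequence embedding plus Elliott intertwining for the converse), and the intertwining step you invoke at the end is a correct and well-known reduction: for separable unital $A$ and strongly self-absorbing $D$, a unital $*$-homomorphism $D\to A_\omega\cap A'$ does imply $A\cong A\ot D$.

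The genuine gaps are in the three inputs you feed into that machine, each of which is itself a substantial theorem rather than a verification. (1) That $A\ot_{C^*\text{-}\min}\mO_\infty$ is purely infinite for simple $A$ is a theorem of Kirchberg--Phillips; your ``reduce the second tensor factor to a scalar'' sketch only treats elementary tensors, and passing from a general nonzero positive element of the tensor product to a dominated elementary tensor is exactly the hard part (it needs a slice-map/excision argument, not a ``slight perturbation''). (2) Your proposed proof that $\mO_\infty\ot\mO_\infty\cong\mO_\infty$ by ``splitting the isometries into two commuting families'' does not work as stated: a single Cuntz family in $\mO_\infty$ does not split into two mutually commuting Cuntz families, and in the literature this self-absorption is either part of the same Kirchberg theorem you are trying to prove or is derived from the classification of unital endomorphisms of $\mO_\infty$ up to approximate unitary equivalence --- so as written this step is close to circular. (3) The central-sequence embedding $\mO_\infty\to A_\omega\cap A'$ is the true content of the theorem; pure infiniteness gives isometries with orthogonal ranges (properly infinite unit), but making them asymptotically central requires knowing that $A_\omega\cap A'$ is itself simple and purely infinite, which is Kirchberg's central sequence theorem and uses nuclearity in an essential, nontrivial way. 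You correctly flag this as the hard step, but flagging it is not proving it. In short: the roadmap is right, but every load-bearing lemma on it is assumed rather than established, so this is a description of the known proof rather than a proof.
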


We'll also use the following permanence properties several times ahead (\cite[Theorem 6.1.10]{rordam}\cite[Corollary 4.21]{takesaki}):

\begin{theorem}\label{propexact}
\begin{enumerate}
\item[(i)] Every $C^*$-subalgebra of an exact $C^*$-algebra is again exact.
\item[(ii)] Every quotient of an exact $C^*$-algebra is again exact.
\item[(iii)] If $A$ and $B$ are exact then so is $A \ot_{C^*\text{-}\min} B$.
\item[(iv)] If $A$ and $B$ are simple $C^*$-algebras then $A \ot_{C^*\text{-}\min} B$ is also simple.
\end{enumerate}
\end{theorem}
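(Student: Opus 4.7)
My plan is to treat the four parts using a shared framework: the minimal tensor product is injective (an embedding $A_0 \hookrightarrow A$ induces $A_0 \otimes_{\min} B \hookrightarrow A \otimes_{\min} B$), associative, and exactness of $A$ is by definition the property that $A \otimes_{\min} -$ preserves short exact sequences. Parts (i), (iii), and (iv) then reduce to essentially formal manipulations with these three inputs, while part (ii) requires a substantially deeper structural result.

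For (i), given $A_0 \subseteq A$ with $A$ exact and a short exact sequence $0 \to J \to B \to B/J \to 0$, I would embed the candidate sequence for $A_0$ into the already-exact sequence for $A$ by injectivity of $\otimes_{\min}$, then perform a diagram chase: any element of $A_0 \otimes_{\min} B$ mapped to zero in $A_0 \otimes_{\min} (B/J)$ must lie in $A \otimes_{\min} J$ by exactness of $A$, and intersecting with $A_0 \otimes_{\min} B$ gives exactly $A_0 \otimes_{\min} J$. For (iii), given $A$, $B$ both exact and a short exact sequence in $C$, I would tensor first with $B$ (using exactness of $B$), then tensor the resulting short exact sequence with $A$ (using exactness of $A$), and apply associativity to identify the outcome with the sequence for $A \otimes_{\min} B$. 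For (iv), given a nonzero closed two-sided ideal $K \subseteq A \otimes_{\min} B$, I would invoke Tomiyama's slice maps $(\mathrm{id}_A \otimes \phi)$ and $(\psi \otimes \mathrm{id}_B)$ to produce nonzero ideals in $A$ and $B$; simplicity forces these to equal $A$ and $B$, and a standard slice-map argument then forces $K = A \otimes_{\min} B$.

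Part (ii) is the main obstacle. The tensor definition cannot be pushed directly through quotients, because a priori ideals in $A \otimes_{\min} B$ need not descend from ideals in either factor, and so one cannot hope for a one-line diagram chase. My approach would invoke Kirchberg's local characterization of exactness: $A$ is exact iff every finite subset admits an $\varepsilon$-approximate completely positive factorization through a matrix algebra $M_n$. Given the quotient map $\pi : A \to A/I$ and a finite subset of $A/I$ lifted to $A$ via the contractive quotient, one applies the local characterization in $A$ and then post-composes with $\pi$ on the second leg to produce the required factorization for $A/I$. The hard part is precisely the passage to this local characterization, which is where Kirchberg's deep work enters and where the real content of the theorem sits; I would therefore cite \cite{rordam, takesaki} rather than reprove this from first principles.
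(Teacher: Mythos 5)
The paper gives no proof of this theorem at all: it is stated as a citation to \cite[Theorem 6.1.10]{rordam} and \cite[Corollary 4.21]{takesaki}, so any argument you supply is by definition a different route, and the question is only whether your sketches are sound. Parts (i) and (iii) are the standard arguments and are essentially correct, with one point in (i) worth flagging: the diagram chase identifies the kernel of $A_0 \otimes_{\min} B \to A_0 \otimes_{\min} (B/J)$ with $(A_0 \otimes_{\min} B) \cap (A \otimes_{\min} J)$, and the further identification of this intersection with $A_0 \otimes_{\min} J$ is \emph{not} automatic --- intersection (slice-map) properties of this kind fail for general triples of subalgebras. The fix is cheap because $J$ is an ideal: multiply an element $x$ of the intersection by $1 \otimes e_\lambda$ for an approximate unit $(e_\lambda)$ of $J$; then $x(1\otimes e_\lambda)$ lies in the closure of $A_0 \odot J$ while converging to $x$ since $x \in A \otimes_{\min} J$. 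For (iv), the slice-map reduction producing nonzero ideals of $A$ and $B$ is the right opening move, but the closing step ``a standard slice-map argument then forces $K = A \otimes_{\min} B$'' is precisely where Takesaki's actual work lives (one needs, e.g., that $\phi \otimes \psi$ is a pure state of the minimal tensor product when $\phi, \psi$ are pure, together with an approximation argument); it is not a formal consequence of the two ideals being all of $A$ and $B$.

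The genuine gap is in (ii): the factorization you describe does not typecheck. Nuclear embeddability of $A$ produces c.c.p.\ maps $\phi : A \to M_n$ and $\psi : M_n \to B(H)$ with $\psi\phi$ approximating the inclusion $A \subseteq B(H)$; the second leg $\psi$ takes values in $B(H)$, not in $A$, so there is no way to ``post-compose with $\pi$ on the second leg'' --- and if $\psi$ could be arranged to land in $A$, you would have proved $A$ nuclear, not merely exact. The true proof that exactness passes to quotients is one of the deep theorems of the subject: it goes through Kirchberg's property $C$ and the Effros--Haagerup lifting theorem (or equivalent machinery), and there is no soft local-approximation reduction of the kind you outline. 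You are right to defer to the cited sources for this part --- the paper does exactly that for all four parts --- but the heuristic you give for why the deferral should work is not the argument that actually works.
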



\subsection{Operator systems}
The concept of operator systems and their tensor products is the familiar one now and most of the details can be seen in \cite{kavnuc, KPTT1, KPTT2}.
\\
Recall that a concrete operator system is a unital self-adjoint subspace of $B(H)$ for some Hilbert space $H$. A \textit{$C^*$-cover} (\cite[$\S 2$]{hamana2}) of an operator system
$\mS$ is a pair $(A, i)$ consisting of a unital $C^*$-algebra $A$ and
a complete order embedding $i : \mS \rightarrow A$ such that $i(A)$
generates the $C^*$-algebra $A$. The \textit{$C^*$-envelope} as defined by Hamana \cite{hamana2}, of an operator system $\mS$ is a
$C^*$-cover defined as the $C^*$-algebra generated by $\mS$ in its
injective envelope $I(\mS)$ and is denoted by $C^*_e (\mS)$.  The $C^*$-envelope $C^*_e (\mS)$
enjoys the following universal ``minimality'' property (\cite[Corollary 4.2]{hamana2}):

{\em Identifying $\mS$ with its image in $C^*_e (\mS)$, for any $C^*$-cover
$(A, i)$ of $\mS$, there is a unique surjective unital
$*$-homomorphism $\pi : A \ra C^*_e (\mS)$ such that $\pi(i(s)) = s$
for every $s$ in $\mS$.}

\begin{remarks}\label{simplecover} If an operator system $\mS$ has a simple $C^*$-cover $(A,i)$ then using this minimality property $\pi$ is injective and $A$ is $*$-isomorphic to the $C^*$-envelope of $\mS$.
\end{remarks}

From \cite{zheng}, for the generators $s_1,s_2,\ldots, s_n$ ($n \geq 2$) of the Cuntz algebra $\mO_n$ and  identity $I$,  the Cuntz operator system $\mS_n$ denotes the operator system generated by $s_1,s_2,\ldots, s_n$, that is,
$$\mS_n = span\{I, s_1,s_2,\ldots, s_n, s_1^*,s_2^*,\ldots, s_n^*\} \subset \mO_n.$$
 Similarly, for the generators  $s_1,s_2,\ldots$ of
$\mO_{\infty}$,$$ \mS_{\infty} = span\{I, s_1,s_2,\ldots, s_1^*,s_2^*,\ldots \} \subset \mO_{\infty}.$$

The following well known fact follows directly from \Cref{simplecover} and \Cref{cuntz}:

\begin{proposition}\cite{zheng}\label{cntznvlp}
 $C^*_e(\mS_n) = \mO_n$ for all $1 \leq n \leq \infty.$
\end{proposition}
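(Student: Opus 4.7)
The plan is to recognize that $(\mO_n, \iota)$, with $\iota : \mS_n \hookrightarrow \mO_n$ the defining inclusion, is itself a $C^*$-cover of $\mS_n$, and then to invoke \Cref{simplecover} together with the simplicity of $\mO_n$ recorded in \Cref{cuntz}.

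First I would verify the two defining properties of a $C^*$-cover. Being a concrete unital self-adjoint subspace of $\mO_n$, the operator system $\mS_n$ inherits its matrix order from $\mO_n$, so $\iota$ is automatically a unital complete order embedding. Moreover, by construction $\mS_n$ contains the unit and every generating isometry $s_1, s_2, \ldots$, so the $C^*$-subalgebra of $\mO_n$ generated by $\iota(\mS_n)$ is all of $\mO_n$. Thus $(\mO_n, \iota)$ is a $C^*$-cover of $\mS_n$.

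Next, \Cref{cuntz} asserts that $\mO_n$ is simple, so $(\mO_n,\iota)$ is in fact a \emph{simple} $C^*$-cover of $\mS_n$. Applying \Cref{simplecover}, the canonical surjection $\pi : \mO_n \twoheadrightarrow C^*_e(\mS_n)$ provided by the universal minimality of the $C^*$-envelope must be injective (its kernel is a closed two-sided ideal in the simple algebra $\mO_n$, hence zero), and is therefore a $*$-isomorphism. This gives $C^*_e(\mS_n) \cong \mO_n$. No real obstacle arises; the argument is essentially a bookkeeping combination of the two inputs highlighted just before the statement, and the only substantive verification—that $\mS_n$ actually generates $\mO_n$—is immediate from the definition of $\mS_n$.
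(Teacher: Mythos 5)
Your argument is correct and is exactly the route the paper indicates: it observes that $(\mO_n,\iota)$ is a $C^*$-cover, invokes the simplicity of $\mO_n$ from \Cref{cuntz}, and applies \Cref{simplecover} to conclude that the canonical surjection onto $C^*_e(\mS_n)$ is injective. No differences worth noting.
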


A lattice of tensor products of
operator systems admitting a natural partial order: 
 $$min \leq e \leq el , er \leq  c \leq max,$$were introduced in \cite{KPTT1}. In \cite{disgrp}, a natural operator system
tensor product ``ess'' arising from the enveloping $C^*$-algebras,
viz., $\mathrm{\mS \ot_{ess} \mT \subseteq C^*_e(\mS) \ot_{max} C^*_e(\mT)},$
was also defined.
\\\\
The notion of \emph{exactness} saw its relevance in the theory of
operator systems after Kavruk et al.(2013) appropriately
formalized the notion of quotient for operator systems. 
\\
An operator system $\mS$ is said to be 
 $\mathrm{exact}$ if 
for every unital $C^*$-algebra $A$ and a closed ideal $I$ in $A$ sequence
$$0 \longrightarrow  \mS \hat{\ot}_{\mathrm{min}} I \longrightarrow \mS \hat{\ot}_{\mathrm{min}} A \ra \mS \hat{\ot}_{\mathrm{min}}
(A/ I) \ra 0$$ is exact.
\\\\
Given two operator system tensor
products $\alpha$ and $\beta$, an operator system $\mS$ is said to be
\emph{$(\alpha, \beta)$-nuclear} if the identity map between $\mS
\ot_{\alpha} \mT$ and $\mS \ot_{\beta} \mT$ is a complete order
isomorphism for every operator system $\mT$, i.e. $$\mS \ot_{\alpha}
\mT = \mS \ot_{\beta} \mT.$$ Also, an operator system $\mS$ is said to
be \textit{$C^*$-nuclear}, if $$\mS \ot_{\mathrm{min}} A =\mS
\ot_{\mathrm{max}} A$$ for all unital $C^*$-algebras $A$. For a $C^*$-algebra $A$, $A \ot_{c} \mS=A \ot_{\max} \mS$ for every operator system $\mS$ (\cite[Theorem $6.7$]{KPTT1}), that is, $A$ is $(\min,\mathrm{max})$-nuclear if and only if it is $(\min, \mathrm{c})$-nuclear.\\Exactness is one of the few intrinsic properties of operator systems
that has been used as a tool, by Kavruk et al.~(see for example
\cite{kavnuc}), in characterizing nuclearity properties of operator
systems.

\begin{theorem}\cite[Proposition 1.1, Corollary 2.8]{zheng2}\label{cuntzopsysnuc}
$\mS_n$ is $(\min,\mathrm{c})$-nuclear but not $(\mathrm{min},\mathrm{max})$-nuclear.
\end{theorem}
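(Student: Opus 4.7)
The plan is to establish the two claims by different routes: nuclearity of $\mO_n$ for the positive $(\min,\mathrm{c})$-direction, and a witness argument (or WEP obstruction) for the failure of $(\min,\max)$-nuclearity.

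For $(\min,\mathrm{c})$-nuclearity I would sandwich both $\mS_n \ot_{\min} \mT$ and $\mS_n \ot_{\mathrm{c}} \mT$ inside the common ambient $\mO_n \ot_{\max} \mT$ for an arbitrary operator system $\mT$. Nuclearity of $\mO_n$ (\Cref{cuntz}) gives $\mO_n \ot_{\min}\mT = \mO_n \ot_{\max}\mT$, and as $\mO_n$ is a $C^*$-algebra the preliminary identification $\mO_n \ot_{\mathrm{c}}\mT = \mO_n \ot_{\max}\mT$ applies. Injectivity of $\min$ gives a complete order embedding $\mS_n \ot_{\min}\mT \hookrightarrow \mO_n \ot_{\min}\mT$ automatically. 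The decisive step, which is the main technical obstacle, is to prove that the natural morphism $\mS_n \ot_{\mathrm{c}}\mT \to \mO_n \ot_{\max}\mT$ is likewise a complete order embedding; once both tensor products sit inside the same $\mO_n \ot_{\max}\mT$, their matrix cones on the algebraic tensor $\mS_n \ot \mT$ must coincide, yielding $\mS_n \ot_{\min}\mT = \mS_n \ot_{\mathrm{c}}\mT$. For the $\mathrm{c}$-embedding I would use the universal property of $\mathrm{c}$: any commuting ucp pair $\phi:\mS_n \to B(H)$, $\psi:\mT \to B(H)$ is Stinespring-dilated to a $*$-representation $\pi:\mO_n \to B(K)$, and an Arveson-style commutant-lifting argument produces a compatible dilation $\tilde\psi:\mT \to B(K)$ whose range commutes with $\pi(\mO_n)$; universality of $\max$ on the $C^*$-algebra factor then assembles $(\pi,\tilde\psi)$ into a ucp map on $\mO_n \ot_{\max}\mT$ whose restriction recovers $\phi \cdot \psi$.

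For the failure of $(\min,\max)$-nuclearity, I would argue by producing a witness: choose $\mT = \mS_n$ and search for an element of $M_k(\mS_n \ot \mS_n)$ whose min- and max-positivity differ, built from the tensor expression of the Cuntz relation $\sum_i s_i s_i^* = I$. Given the $(\min,\mathrm{c})$-nuclearity just established, this reduces to showing $\mathrm{c} \neq \max$ on $\mS_n \ot \mS_n$, i.e., that there is a $\mathrm{c}$-positive element that is not $\max$-positive. A more conceptual route is to observe that $(\min,\max)$-nuclearity of $\mS_n$ would entail the WEP for $\mS_n$, and then to argue that a finite-dimensional self-adjoint generating span inside a simple, purely infinite, infinite-dimensional $C^*$-algebra (namely $\mO_n$, by \Cref{cuntz} and \Cref{cntznvlp}) cannot possess the WEP, because the required cp splitting is incompatible with the pure infiniteness and simplicity of $\mO_n$. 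In either approach the subtle point is pinpointing the exact source of obstruction, and I would rely on the contrast between the algebraic minimality of $\mS_n$ and the rich structure of $\mO_n$ to extract a concrete discrepancy.
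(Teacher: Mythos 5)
This statement is imported from \cite{zheng2}; the paper gives no proof of its own (only the remark that follows it, which re-derives the negative half via $(\mathrm{ess},\mathrm{max})$-nuclearity). So I am comparing your proposal against the argument in the cited reference and against what is actually true.

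Your positive half is the right strategy and essentially the one used by Paulsen and Zheng: sandwich both structures on $\mS_n \ot \mT$ inside $\mO_n \ot_{\max}\mT$ (using nuclearity of $\mO_n$ from \Cref{cuntz} and $\mO_n\ot_{\mathrm{c}}\mT=\mO_n\ot_{\max}\mT$), and reduce the $\mathrm{c}$-embedding to dilating a commuting pair $(\phi,\psi)$ to a pair $(\pi,\tilde\psi)$ with $\pi$ a representation of $\mO_n$ and $\tilde\psi$ commuting with it. But the step you defer --- the ``Arveson-style commutant lifting'' --- is the entire content: one needs that ucp maps on $\mS_n$ are exactly row contractions $(\phi(s_1),\dots,\phi(s_n))$, that the Frazho--Bunce--Popescu isometric dilation of a row contraction can be upgraded to a Cuntz family, and that this dilation carries the commutant of $\{\phi(s_i),\phi(s_i)^*\}$ along in a ucp-compatible way. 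As written, this is a plan, not a proof.

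The negative half contains a genuine error. Your ``conceptual route'' is self-defeating: $(\min,\mathrm{c})$-nuclearity, which you have just established, already forces $(\mathrm{el},\mathrm{c})$-nuclearity (since $\min\leq \mathrm{el}\leq \mathrm{c}$), and by \cite[Theorem 7.3]{KPTT2} that is precisely the DCEP. So $\mS_n$ \emph{does} have the operator-system analogue of WEP, and the claimed obstruction --- that a finite-dimensional generating operator system of a simple, purely infinite $C^*$-algebra cannot admit the required cp splitting --- is false; if that heuristic were valid it would equally rule out the DCEP, which $\mS_n$ provably has. (If by ``WEP'' you mean the stronger $(\mathrm{el},\max)$-nuclearity, the target is then correct but your obstruction still proves too much, for the same reason.) Your ``witness'' route produces no element, and it is not clear that $\mT=\mS_n$ even detects the failure. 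The argument that actually works, and the one behind \cite[Corollary 2.8]{zheng2}, is structural: a finite-dimensional operator system is $(\min,\max)$-nuclear if and only if it is unitally completely order isomorphic to a $C^*$-algebra (Kavruk; see \cite{kavnuc}), and $\mS_n$ cannot be, since then $C^*_e(\mS_n)$ would be $\mS_n$ itself, of dimension $2n+1$, contradicting $C^*_e(\mS_n)=\mO_n$ (\Cref{cntznvlp}). Alternatively, the remark following the theorem in this paper obtains the same conclusion from the failure of $(\mathrm{ess},\max)$-nuclearity.
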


\begin{remarks} \emph{Since $C^*_e(\mS_n)=\mO_n$ is $C^*$-nuclear (\Cref{cuntz}), using \cite[Proposition $4.2$]{opsysnuc} one can conclude that $\mS_n$ is $(\mathrm{min},\mathrm{ess})$-nuclear. Further by \cite[Proposition $5.2$]{opsysnuc}, for $1 \leq n < \infty$, $\mS_n$ is not $(\mathrm{ess},\mathrm{max})$-nuclear, giving an alternate proof of the fact that $\mS_n$ is not $(\mathrm{min},\mathrm{max})$-nuclear.}
\end{remarks}

One more
fundamental $C^*$-cover, the maximal one, is associated to an operator system $\mS$,
namely, the universal $C^*$-algebra $C^*_u(\mS)$ introduced by
Kirchberg and Wassermann (\cite[$\S 3$]{kirchberg}). $C^*_u(\mS)$ satisfies the following universal ``maximality'' property:
\\
\emph{Every unital completely positive map $\phi: \mS \ra A$, where $A$ is a unital $C^*$-algebra extends uniquely to a unital $*$-homomorphism $\pi : C^*_u(\mS) \ra A$.}
\\
Recall from \cite{KPTT2}, a subspace $J$ of an operator system $\mS$ is said to be a \textit{kernel} if it is the kernel of some unital completely positive map from $\mS$ into some operator system $\mT$.
It was shown in \cite[Corollary $3.8$]{KPTT2} that a subspace $J $ of an operator system $\mS$ is a kernel of $\mS$ if and only if $J$ is an intersection of a closed two-sided ideal in $C^*_u(\mS)$ with $\mS$. This does create a curiosity regarding simplicity of $C^*_u(\mS)$. But the following result shows that $C^*_u(\mS)$ is never simple.

\begin{proposition}\label{:p} 
For an operator system $\mS$ with $dim(\mS) > 1$; $C^*_u(\mS)$ is not simple.
\end{proposition}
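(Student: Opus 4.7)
The strategy is to use the universal property of $C^*_u(\mS)$ to produce a character on it whose kernel will be a non-trivial ideal, and then pit simplicity against the dimension hypothesis.

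First I would invoke the fact that every operator system admits a state: a unital completely positive functional $\phi : \mS \to \C$, obtained for instance as a vector state of a faithful concrete representation $\mS \hookrightarrow B(H)$ (or via a Hahn--Banach extension of the functional sending $1_\mS$ to $1$). By the universal ``maximality'' property recalled just above the statement, such a $\phi$ extends to a unital $*$-homomorphism $\tilde\phi : C^*_u(\mS) \to \C$. Since $\tilde\phi(1) = 1 \neq 0$, the kernel $\ker \tilde\phi$ is a proper closed two-sided ideal of $C^*_u(\mS)$.

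Now suppose, towards a contradiction, that $C^*_u(\mS)$ is simple. Then $\{0\}$ and $C^*_u(\mS)$ are its only closed two-sided ideals, so $\ker \tilde\phi = \{0\}$, and $\tilde\phi$ is an isomorphism of $C^*_u(\mS)$ onto $\C$. Since the canonical map $\mS \hookrightarrow C^*_u(\mS)$ is a unital complete order embedding (this is part of the Kirchberg--Wassermann construction), it follows that $\dim(\mS) \leq \dim(\C) = 1$, contradicting $\dim(\mS) > 1$.

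There is no real obstacle here: the argument is essentially a one-line application of two standard facts---existence of states on an operator system and the universal property of $C^*_u(\mS)$---combined with the trivial observation that a simple unital $C^*$-algebra admitting a character must be $\C$. In fact the proof shows more, namely that $C^*_u(\mS)$ has an abundance of characters (one for each state of $\mS$), so every maximal ideal arising in this way witnesses the non-simplicity.
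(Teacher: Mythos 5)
Your proof is correct, and it takes a genuinely different route from the paper's. The paper argues via operator system kernels: it cites the fact that every kernel $J$ of $\mS$ has the form $I \cap \mS$ for some closed two-sided ideal $I$ of $C^*_u(\mS)$, together with the result that every operator system of dimension greater than $1$ possesses a non-trivial kernel; simplicity of $C^*_u(\mS)$ would force every such $J$ to be $(0)$ or $\mS$, a contradiction. You instead manufacture a character on $C^*_u(\mS)$ directly, by extending a state of $\mS$ (a positive functional on an operator system is automatically completely positive, so the universal property applies with $A=\C$), and then use the elementary observation that a simple unital $C^*$-algebra admitting a character must be $\C$ itself, which is incompatible with $\dim(\mS)>1$ because the canonical map $\mS \to C^*_u(\mS)$ is a complete order embedding. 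Your argument is more self-contained, replacing the two cited results on operator system quotients and kernels by the existence of states and the defining universal property of $C^*_u(\mS)$; it can even be phrased without contradiction, since $\ker\tilde\phi$ is proper (as $\tilde\phi(1)=1$) and non-zero (as $\dim C^*_u(\mS) \geq \dim\mS > 1 = \dim\C$), hence is already a witness to non-simplicity. Morally the two proofs are close---the most natural non-trivial kernel of $\mS$ is precisely the kernel of a state---but yours bypasses the kernel machinery entirely.
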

\begin{proof}
 Let $J \subset \mS$ be kernel in $\mS$, then by \cite[Corollary 3.8]{KPTT2}, $J= I \cap \mS$ for some closed two-sided ideal $I$ in $C^*_u(\mS)$. If $C^*_u(\mS)$ is simple then either $J=(0)$ or $J=\mS$. But by \cite[Corollary 6.12]{kavnuc}, any operator system with dimension greater than 1 has a non-trivial kernel, a contradiction. 
\end{proof}

Using minimality property of $C^*$-envelopes, there is a surjective $*$-
homomorphism $\sigma_{\mS} : C^*_u (\mS) \rightarrow C^*_e (\mS)$ that fixes $\mS$. And, hence simplicity of $C^*_u(S)$ implies simplicity of $C^*_e(\mS)$ (\Cref{simplecover}).
Therefore, an operator system kernel has no relation with the simplicity of its $C^*$-envelope.
\\
An operator system $\mS$ for which $\sigma_{\mS}$ is a $*$-isomorphism is
said to be \emph{universal} (\cite{kirchberg}). In particular, this property implies that if $\sigma_S : \mS \rightarrow A$ is any $C^*$-cover
of $S$, then $A \cong C^*_u (\mS) \cong C^*_e (\mS)$. Thus preceding proposition implies the following.

\begin{corollary}
 There does not exist any universal operator system $\mS$ with simple $C^*$-cover unless $\mS =\mathbb{C}$.
\end{corollary}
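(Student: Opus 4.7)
The plan is to derive this directly from \Cref{:p} by combining the universality hypothesis with the universal property of $C^*_u(\mS)$. Suppose, for contradiction, that $\mS$ is a universal operator system with $\dim(\mS)>1$ admitting a simple $C^*$-cover $(A,i)$.

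By the universality hypothesis, $\sigma_{\mS}: C^*_u(\mS) \to C^*_e(\mS)$ is a $*$-isomorphism, and more generally (as noted just before the statement), any $C^*$-cover of $\mS$ is $*$-isomorphic to $C^*_u(\mS)$. Applied to $(A,i)$, I would conclude that $A \cong C^*_u(\mS)$. Alternatively, one can argue directly: the inclusion $\mS \hookrightarrow A$ is unital completely positive, so by the universal property of $C^*_u(\mS)$ it extends to a surjective $*$-homomorphism $C^*_u(\mS) \to A$; since $A$ is simple this map is either zero (impossible, as it fixes $\mS$) or injective, hence a $*$-isomorphism.

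Either way, $C^*_u(\mS)$ inherits simplicity from $A$. This contradicts \Cref{:p}, which states that $C^*_u(\mS)$ is never simple when $\dim(\mS)>1$. Therefore $\dim(\mS)=1$, i.e.\ $\mS = \C$, as required. I do not anticipate any technical obstacle: the corollary is essentially a one-line consequence once \Cref{:p} and the universality property are in place, and the only point needing care is invoking the correct direction of the universal property to identify $A$ with $C^*_u(\mS)$.
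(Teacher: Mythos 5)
Your main argument is correct and is exactly the paper's (unstated) proof: universality identifies any $C^*$-cover $A$ of $\mS$ with $C^*_u(\mS)$, so a simple $C^*$-cover would make $C^*_u(\mS)$ simple, contradicting \Cref{:p} when $\dim(\mS)>1$. One caveat: the ``alternative'' direct argument in your second paragraph is flawed as stated --- simplicity of the \emph{codomain} $A$ does not force the surjection $C^*_u(\mS)\twoheadrightarrow A$ to be injective (its kernel is an ideal of $C^*_u(\mS)$, not of $A$); to make that route work you should compose with the quotient $A\to C^*_e(\mS)$ coming from minimality and observe that the composite fixes $\mS$, hence equals the isomorphism $\sigma_{\mS}$, which forces injectivity.
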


In general, the isomorphism between operator systems need not extend to their $C^*$-covers; but the following result from \cite{arveson} is quite useful.

\begin{theorem}\cite[Theorem $2.2.5$]{arveson}\label{extendiso}
For $\mS \subseteq C^*_e(\mS)$ and $\mT \subseteq C^*_e(\mT)$, for any complete order isomorphism $\phi$ of $ \mS$ onto $\mT$, there exists a $*$-isomorphism $\hat{\phi}$ from $C^*_e(\mS)$ onto $C^*_e(\mT)$, with $\hat{\phi}|_{\mS}= \phi.$
\end{theorem}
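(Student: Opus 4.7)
The plan is to invoke the minimality (universal) property of the $C^*$-envelope twice, once through $\phi$ and once through $\phi^{-1}$, and then to deduce that the two resulting homomorphisms are mutually inverse.

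First I would observe that since $\phi : \mS \to \mT$ is a complete order isomorphism and $\mT$ generates $C^*_e(\mT)$ as a $C^*$-algebra, the composition $\iota_{\mT} \circ \phi : \mS \to C^*_e(\mT)$ is a complete order embedding whose image generates $C^*_e(\mT)$. Hence $(C^*_e(\mT), \iota_{\mT}\circ \phi)$ is a $C^*$-cover of $\mS$, and the minimality property stated just after the definition of $C^*_e(\mS)$ yields a unique surjective unital $*$-homomorphism $\rho : C^*_e(\mT) \to C^*_e(\mS)$ with $\rho(\phi(s)) = s$ for all $s \in \mS$. Applying the same reasoning to $\phi^{-1} : \mT \to \mS \subseteq C^*_e(\mS)$, which is again a complete order embedding whose image generates $C^*_e(\mS)$, produces a surjective unital $*$-homomorphism $\hat{\phi} : C^*_e(\mS) \to C^*_e(\mT)$ satisfying $\hat{\phi}(s) = \phi(s)$ for every $s \in \mS$; this is the candidate extension.

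To conclude that $\hat{\phi}$ is a $*$-isomorphism, I would consider the compositions $\rho \circ \hat{\phi} : C^*_e(\mS) \to C^*_e(\mS)$ and $\hat{\phi} \circ \rho : C^*_e(\mT) \to C^*_e(\mT)$. Both are unital $*$-homomorphisms that fix $\mS$, respectively $\mT$, pointwise. Viewing $C^*_e(\mS)$ (respectively $C^*_e(\mT)$) as a $C^*$-cover of $\mS$ (respectively $\mT$) via the inclusion, the identity map obviously also has this property, so the uniqueness clause in the minimality property forces $\rho \circ \hat{\phi} = \mathrm{id}_{C^*_e(\mS)}$ and $\hat{\phi} \circ \rho = \mathrm{id}_{C^*_e(\mT)}$. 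Therefore $\hat{\phi}$ is the required $*$-isomorphism with $\hat{\phi}|_{\mS} = \phi$.

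The proof is essentially a formal exercise in the universal property, and I do not anticipate a real obstacle; the only point that needs care is to apply minimality in the correct direction (from each $C^*$-cover \emph{onto} the envelope) and to use the fact that $\phi$ is a complete order \emph{isomorphism}, not merely an embedding, so that the same construction is available with $\phi^{-1}$. Uniqueness of the extension (if desired) would follow from the same uniqueness clause in the minimality property.
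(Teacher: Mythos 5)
Your argument is correct. The paper itself gives no proof of this statement (it is quoted from Arveson), but your derivation is the standard one: the two applications of the minimality property produce mutually inverse surjections, and the final step is sound since $\rho\circ\hat{\phi}$ and $\hat{\phi}\circ\rho$ are surjective unital $*$-homomorphisms fixing $\mS$ and $\mT$ respectively, so the uniqueness clause (or, even more simply, the fact that a $*$-homomorphism is determined by its values on a generating set) forces them to be the identity maps.
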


Recall an operator subsystem $\mS$ of a unital $C^*$-algebra $A$ is said \textit{to
contain enough unitaries of $A$} if the unitaries in $\mS$ generate $A$
as a $C^*$-algebra (\cite[$\S 9$]{KPTT2}).
Next lemma, although a folklore, is used several times.

\begin{lemma}\label{lemma} 
For operator systems $\mS$ and $\mT$ with either both  $C^*_e(\mS)$ and $C^*_e(\mT)$ simple or both $\mS$ and $\mT$ having enough unitaries of $C^*_e(\mS)$ and $C^*_e(\mT)$, respectively, the inclusion of $\mS \ot_{\min} \mT$ into $C^*_e(\mS) \ot_{C^*\text{-}{\min}} C^*_e(\mT)$  extends to a $*$-isomorphism between $C^*_e(\mS \ot_{\min} \mT)$ and $C^*_e(\mS) \ot_{C^*\text{-}{\min}} C^*_e(\mT)$, that is,
$$C^*_e(\mS \ot_{\min} \mT) \cong C^*_e(\mS) \ot_{C^*\text{-}{\min}} C^*_e(\mT).$$
\end{lemma}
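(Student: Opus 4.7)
The plan is to establish that $C^*_e(\mS) \ot_{C^*\text{-}\min} C^*_e(\mT)$ together with the natural inclusion is a $C^*$-cover of $\mS \ot_{\min} \mT$, and then to verify in each of the two hypothesized cases that this cover satisfies the universal minimality property of the $C^*$-envelope. Verifying the cover property is routine: since $\mS \hookrightarrow C^*_e(\mS)$ and $\mT \hookrightarrow C^*_e(\mT)$ are complete order embeddings, injectivity of the minimal operator system tensor product gives a complete order embedding $\mS \ot_{\min} \mT \hookrightarrow C^*_e(\mS) \ot_{C^*\text{-}\min} C^*_e(\mT)$; and since $1 \in \mS$ and $1 \in \mT$, the $C^*$-algebra generated by the image contains every $s \ot 1$ and every $1 \ot t$, hence every product $s \ot t$, and therefore, as $\mS$ and $\mT$ generate their respective envelopes, it equals all of $C^*_e(\mS) \ot_{C^*\text{-}\min} C^*_e(\mT)$.

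In the case that both $C^*_e(\mS)$ and $C^*_e(\mT)$ are simple, \Cref{propexact}(iv) guarantees that $C^*_e(\mS) \ot_{C^*\text{-}\min} C^*_e(\mT)$ is itself simple, and \Cref{simplecover} then immediately identifies this simple $C^*$-cover with $C^*_e(\mS \ot_{\min} \mT)$.

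In the enough-unitaries case, the strategy is to show that $\mS \ot_{\min} \mT$ itself contains enough unitaries of the tensor product $C^*$-algebra. If $u \in \mS$ is a unitary of $C^*_e(\mS)$ and $v \in \mT$ is a unitary of $C^*_e(\mT)$, then $u \ot 1$ and $1 \ot v$ lie in $\mS \ot_{\min} \mT$ and are unitary in $C^*_e(\mS) \ot_{C^*\text{-}\min} C^*_e(\mT)$. Taking products and adjoints of such elements produces arbitrary words $(u_1 \cdots u_k) \ot (v_1 \cdots v_l)$, and by hypothesis such products span dense subsets of $C^*_e(\mS)$ and $C^*_e(\mT)$; hence these unitaries together generate the whole minimal tensor-product $C^*$-algebra. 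The enough-unitaries criterion from \cite[\S 9]{KPTT2} then identifies $C^*_e(\mS) \ot_{C^*\text{-}\min} C^*_e(\mT)$ with $C^*_e(\mS \ot_{\min} \mT)$.

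The main delicate point is showing that the canonical $C^*$-cover surjection $C^*_e(\mS) \ot_{C^*\text{-}\min} C^*_e(\mT) \to C^*_e(\mS \ot_{\min} \mT)$ has trivial kernel; in Case~1 this is immediate from simplicity of the source via \Cref{simplecover}, while in Case~2 it is supplied by the enough-unitaries theorem of \cite{KPTT2}. Everything else reduces to elementary manipulations with the minimal tensor product and with generating sets of unitaries.
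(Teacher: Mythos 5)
Your proof is correct and follows essentially the same route as the paper's: in the simple case it combines simplicity of the minimal tensor product of simple $C^*$-algebras with the minimality property of the $C^*$-envelope (\Cref{propexact}(iv) and \Cref{simplecover}), and in the other case it observes that $\mS \ot_{\min} \mT$ contains enough unitaries of $C^*_e(\mS) \ot_{C^*\text{-}\min} C^*_e(\mT)$ and invokes the enough-unitaries result from \cite{KPTT2}. The only difference is that you spell out the verification that the tensor product is a $C^*$-cover and that products of the unitaries $u \ot 1$ and $1 \ot v$ generate the whole algebra, details the paper leaves implicit.
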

\begin{proof}
Consider the natural inclusions $i_{\mS} : \mS \hookrightarrow C^*_e(\mS)$ and $i_{\mT} : \mT \hookrightarrow C^*_e(\mT)$. Then $i_{\mS} \ot i_{\mT} : \mS \ot_{\min} \mT \hookrightarrow C^*_e(\mS) \ot_{C^*\text{-}{\min}} C^*_e(\mT)$, is a $C^*$-cover of $\mS \ot_{\min} \mT$, and in fact it is simple by \Cref{propexact}(iv). Hence by \Cref{simplecover} statement follows. 
\\
For the enough unitaries case just note the fact that  $\mS \ot_{\min} \mT$  has enough unitaries of $C^*_e(\mS) \ot_{C^*\text{-}{\min}} C^*_e(\mT)$ and apply \cite[Proposition 5.6]{KPTT2} to say that upto $*$-isomorphism that fixes $\mS \ot_{\min} \mT$, $C^*_e(\mS \ot_{\min} \mT)= C^*_e(\mS) \ot_{C^*\text{-}\min} C^*_e(\mT)$.
\end{proof}


\section{Embedding of exact operator systems into $\mO_2$}\label{s:3}

The relationship between an operator system and its $C^*$-envelope is a mysterious one. In \cite{kirchberg}, Kirchberg and Wasserman gave an example of a universal separable exact operator system $\mS$ with  non exact $C^*$-envelope. Another interesting example was recently constructed by Lupini in \cite{lupini}, namely, the Gurarij operator system $\mathbb{GS}$, which is exact but does not admit any complete order embedding into any exact $C^*$-algebra. Thus in general, unlike $C^*$-algebras, separable exact operator systems need not embed into $\mO_2$. But, in the next theorem we prove an embedding theorem that shows that it is the exactness of the $C^*$-envelope, rather than that of the operator system, that makes an operator system embeddable into $\mO_2$.

\begin{theorem}\label{embed} 
For a separable operator system $\mS$ the $C^*$-envelope $C^*_e(\mS)$ is
exact if and only if there exist a unital complete order embedding of $\mS $ into $\mathcal{O}_2$.
\end{theorem}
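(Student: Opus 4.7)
The plan is to exploit the universal minimality property of the $C^*$-envelope together with Kirchberg's embedding theorem (\Cref{kirchexact}) and the permanence properties in \Cref{propexact}. The statement splits cleanly into two directions, and neither should present a serious obstacle.

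For the ``if'' direction, suppose $\iota : \mS \hookrightarrow \mO_2$ is a unital complete order embedding. Let $A$ be the $C^*$-subalgebra of $\mO_2$ generated by $\iota(\mS)$; then $(A, \iota)$ is a $C^*$-cover of $\mS$. Since $\mO_2$ is nuclear (hence exact) by \Cref{cuntz}, \Cref{propexact}(i) gives that $A$ is exact. By the minimality property of the $C^*$-envelope recalled in \Cref{s:2}, there is a surjective unital $*$-homomorphism $\pi : A \twoheadrightarrow C^*_e(\mS)$ that fixes $\mS$. Then \Cref{propexact}(ii) shows that $C^*_e(\mS)$, being a quotient of an exact $C^*$-algebra, is itself exact.

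For the ``only if'' direction, assume $C^*_e(\mS)$ is exact. First I would observe that $C^*_e(\mS)$ is separable: as a $C^*$-algebra it is generated by the image of the separable operator system $\mS$, hence generated by a countable set, and so is separable. Now \Cref{kirchexact} supplies a unital embedding $\rho : C^*_e(\mS) \hookrightarrow \mO_2$. Composing $\rho$ with the canonical inclusion $\mS \hookrightarrow C^*_e(\mS)$ yields a unital complete order embedding of $\mS$ into $\mO_2$, as desired.

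The only place requiring a moment of care is the separability claim in the forward implication, but this is immediate from the fact that $C^*_e(\mS)$ is the $C^*$-algebra generated by $\mS$ inside $I(\mS)$. No step seems to be a genuine obstacle; the theorem should essentially follow by assembling Kirchberg's theorem, the universal property of $C^*_e(\cdot)$, and hereditariness of exactness under subalgebras and quotients.
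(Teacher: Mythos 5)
Your proof is correct and follows essentially the same route as the paper: Kirchberg's embedding theorem for the converse, and for the forward direction the exactness of the subalgebra $C^*(\iota(\mS))\subseteq\mO_2$ combined with the minimality property of the $C^*$-envelope and permanence of exactness under quotients. Your version is in fact marginally cleaner, since you apply the universal property directly to the $C^*$-cover $(A,\iota)$ to get a surjection onto $C^*_e(\mS)$, whereas the paper detours through $C^*_e(\psi(\mS))$ and \Cref{extendiso}; you also explicitly justify the separability of $C^*_e(\mS)$ needed to invoke \Cref{kirchexact}, which the paper leaves implicit.
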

\begin{proof}
 For the if part, let $\psi: \mS \rightarrow \mO_2$ be a complete order embedding. Then $\psi$ can be extended to a $*$-isomorphism on the $C^*$-envelope of $\mS$, say, $\hat{\psi} :C^*_e(\mS) \ra C^*_e(\psi(\mS))$ such that $\hat{\psi}|_{\mS}=\psi$,  using \Cref{extendiso}.
 Consider the $C^*$-algebra generated by $\psi(\mS) \subset \mO_2$,  $C^*(\psi(\mS)) \subseteq \mO_2$. Using \Cref{kirchexact}, $C^*(\psi(\mS))$ being a $C^*$-subalgebra of $\mO_2$, is exact. Further by universal (minimality) property of $C^*$-envelopes of operator systems, there exist a surjective $*$-homomorphism $\pi: C^*(\psi(\mS)) \ra C^*_e(\psi(\mS))$ such that the diagram commutes 
 \[\begin{tikzcd} \mO_2  &&\\
 C^*(\psi(\mS)) \arrow[hookrightarrow]{u}{} \arrow{drr}{\pi}   &   &  \\ 
\psi(\mS) \arrow[hookrightarrow]{u}{} \arrow{rr}{i_{\psi(\mS)}} && C^*_e(\psi(\mS))\\
\mS \arrow{u}{\psi} \arrow{rr}{i_{\mS}} & &  C^*_e(\mS) \arrow{u}[swap]{\hat{\psi}}
\end{tikzcd}\] 
where $i_{\mS}$ and $i_{\psi(\mS)}$ denote the natural complete order inclusion of $\mS$ and $\psi(\mS)$ into their respective $C^*$-envelopes. Thus $C^*_e(\psi(\mS)) $ is the $*$-homomorphic image of an exact $C^*$-algebra $C^*(\psi(\mS))$, and hence is exact (\Cref{propexact}(ii)). 
Therefore, $\hat{\psi}^{-1}(C^*_e(\psi(\mS))=C^*_e(\mS)$ is exact.
\\
Conversely, let $C^*_e(\mS)$ be exact.
Then, from Kirchberg's embedding theorem, there exist a complete order embedding $\phi$ of $ C^*_e(\mS)$ into  $\mO_2$. Then $\phi \circ i_{\mS} : \mS \ra \mO_{2}$ is the required unital complete order embedding of $\mS$ into $\mO_2$, where $i_{\mS}$ denote the natural complete order inclusion of $\mS$ into $C^*_e(\mS)$ .
\end{proof}

\begin{corollary}
 For an exact separable operator system $\mS$ containing enough unitaries of its $C^*$-envelope, $\mS$ embeds into $\mO_2$.  
\end{corollary}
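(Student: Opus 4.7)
By Theorem \ref{embed}, the conclusion reduces to showing that the (automatically separable) $C^*$-envelope $C^*_e(\mS)$ is an exact $C^*$-algebra. My strategy is to bootstrap the operator-system exactness of $\mS$ to $C^*$-exactness of $C^*_e(\mS)$ using the enough-unitaries hypothesis as the bridge.

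Fix a unital $C^*$-algebra $A$ with closed two-sided ideal $I$. Exactness of $\mS$ gives the short exact operator-system sequence
$$0 \longrightarrow \mS \hat{\otimes}_{\min} I \longrightarrow \mS \hat{\otimes}_{\min} A \longrightarrow \mS \hat{\otimes}_{\min} (A/I) \longrightarrow 0.$$
I would use this to produce the analogous $C^*$-algebraic exact sequence
$$0 \longrightarrow C^*_e(\mS) \otimes_{C^*\text{-}\min} I \longrightarrow C^*_e(\mS) \otimes_{C^*\text{-}\min} A \longrightarrow C^*_e(\mS) \otimes_{C^*\text{-}\min} (A/I) \longrightarrow 0,$$
whose only nontrivial content is that the kernel of the rightmost map $\mathrm{id} \otimes q$ (with $q \colon A \to A/I$ the quotient) equals $C^*_e(\mS) \otimes_{C^*\text{-}\min} I$. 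Setting $Q := [C^*_e(\mS) \otimes_{C^*\text{-}\min} A]\,/\,[C^*_e(\mS) \otimes_{C^*\text{-}\min} I]$, there is a canonical surjective $*$-homomorphism $Q \longrightarrow C^*_e(\mS) \otimes_{C^*\text{-}\min} (A/I)$, and the whole task is to show it is injective. The image of $\mS \otimes_{\min} A$ in $Q$ equals $\mS \otimes_{\min} (A/I)$ by the operator-system exactness of $\mS$, and --- because $\mS$ has enough unitaries of $C^*_e(\mS)$ and $A/I$ trivially contains enough unitaries of itself --- this common copy of $\mS \otimes_{\min} (A/I)$ carries enough unitaries of both $Q$ and $C^*_e(\mS) \otimes_{C^*\text{-}\min} (A/I)$. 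The enough-unitaries half of \Cref{lemma} (based on \cite[Proposition 5.6]{KPTT2}) then identifies both $C^*$-algebras with the $C^*$-envelope $C^*_e(\mS \otimes_{\min} (A/I))$, forcing the canonical surjection above to be a $*$-isomorphism.

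The delicate step --- and the main obstacle --- is precisely this final identification: a priori $Q$ could be strictly larger than $C^*_e(\mS) \otimes_{C^*\text{-}\min} (A/I)$, and ruling this out is literally the $C^*$-exactness assertion we are after. The enough-unitaries hypothesis is what rescues the argument, since it guarantees that any $C^*$-cover of $\mS \otimes_{\min} (A/I)$ generated by the unitaries sitting in $\mS \otimes_{\min} (A/I)$ must coincide with its $C^*$-envelope, leaving no room for a nontrivial ideal in the surjection $Q \to C^*_e(\mS) \otimes_{C^*\text{-}\min} (A/I)$. Once $C^*_e(\mS)$ is thus shown to be exact, Theorem \ref{embed} immediately produces the required unital complete order embedding of $\mS$ into $\mO_2$.
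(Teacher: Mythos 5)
Your argument is correct in outline, but it takes a genuinely different (and much longer) route than the paper. The paper's proof is a one-liner: it invokes \cite[Proposition 10.12]{KPTT2}, which states that for an operator system containing enough unitaries of its $C^*$-envelope, exactness of $\mS$ is equivalent to exactness of $C^*_e(\mS)$, and then applies \Cref{embed}. What you have written is essentially a reconstruction of the proof of the relevant direction of that cited proposition from first principles, using the same enough-unitaries machinery (\cite[Proposition 5.6]{KPTT2}) that underlies \Cref{lemma}. That is legitimate and instructive, but two steps deserve to be made explicit rather than asserted. First, the claim that the image of $\mS \hat\otimes_{\min} A$ in $Q$ ``equals'' $\mS \hat\otimes_{\min}(A/I)$ needs the following sandwich: the $C^*$-quotient map restricted to $\mS\hat\otimes_{\min}A$ is ucp with kernel exactly $\mS\hat\otimes_{\min}I$ (this uses exactness of $\mS$ together with injectivity of $\min$), so it factors through the operator system quotient, which exactness identifies with $\mS\hat\otimes_{\min}(A/I)$; composing the resulting ucp map $\mS\hat\otimes_{\min}(A/I)\to Q$ with the canonical surjection $Q\to C^*_e(\mS)\otimes_{C^*\text{-}\min}(A/I)$ recovers the standard inclusion, which forces the map into $Q$ to be a complete order embedding. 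Second, the final injectivity of $Q\to C^*_e(\mS)\otimes_{C^*\text{-}\min}(A/I)$ follows from the \emph{uniqueness} clause in Hamana's minimality property: once both algebras are identified with $C^*_e(\mS\hat\otimes_{\min}(A/I))$ by $*$-isomorphisms fixing the operator system, the canonical surjection, which also fixes the operator system, must coincide with the composite isomorphism. With those points filled in, your bootstrap gives a self-contained proof; the trade-off is that the paper's citation is shorter, while your version makes visible exactly where the enough-unitaries hypothesis enters. Note also that separability plays no role in your exactness argument; it is needed only at the very end to invoke \Cref{embed}.
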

\begin{proof}
By \cite[Proposition 10.12]{KPTT2}, for the case when $\mS$ contains enough unitaries of $C^*_e(\mS)$,  exactness of $\mS$ is equivalent to exactness of $C^*_e(\mS)$ and hence result follows by \Cref{embed}.
\end{proof} 

\begin{proposition}\label{corone} 
Let $\mT_1$ and $\mT_2$ be separable operator systems. If $C^*_e(\mT_1)$ and $C^*_e(\mT_2)$ are exact, then the operator system $\mT_1 \ot_{min} \mT_2$ embeds into $\mO_2$. Converse holds, if either both $C^*_e(\mT_1)$ and $C^*_e(\mT_2)$ are simple or both $\mT_1$ and $\mT_2$ contain enough unitaries of $C^*_e(\mT_1)$ and $C^*_e(\mT_2)$, respectively.
\end{proposition}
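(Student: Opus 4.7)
The plan is to reduce both directions to \Cref{embed}, using the fact that the minimal tensor product behaves well with respect to inclusions into $C^*$-envelopes together with the permanence properties of exactness in \Cref{propexact}.

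For the forward direction, assume $C^*_e(\mT_1)$ and $C^*_e(\mT_2)$ are exact. First I would note that the natural inclusion $\mT_1 \ot_{\min} \mT_2 \hookrightarrow C^*_e(\mT_1) \ot_{C^*\text{-}\min} C^*_e(\mT_2)$ (which is a complete order embedding by functoriality of $\min$) realizes $\mT_1 \ot_{\min} \mT_2$ inside an exact $C^*$-algebra; exactness of the ambient algebra uses \Cref{propexact}(iii). The $C^*$-subalgebra $C^*(\mT_1 \ot_{\min} \mT_2)$ generated inside it is then exact by \Cref{propexact}(i), and by the universal (minimality) property of $C^*$-envelopes it admits a surjective unital $*$-homomorphism onto $C^*_e(\mT_1 \ot_{\min} \mT_2)$. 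Applying \Cref{propexact}(ii) to this quotient shows $C^*_e(\mT_1 \ot_{\min} \mT_2)$ is exact. Since $\mT_1 \ot_{\min} \mT_2$ is separable, \Cref{embed} delivers the desired unital complete order embedding into $\mO_2$.

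For the converse, suppose $\mT_1 \ot_{\min} \mT_2$ embeds into $\mO_2$. By \Cref{embed}, $C^*_e(\mT_1 \ot_{\min} \mT_2)$ is exact. Under either of the hypotheses on simplicity or enough unitaries, \Cref{lemma} identifies
\[
C^*_e(\mT_1 \ot_{\min} \mT_2) \cong C^*_e(\mT_1) \ot_{C^*\text{-}\min} C^*_e(\mT_2).
\]
Now $C^*_e(\mT_1)$ sits inside this tensor product as a unital $C^*$-subalgebra via $a \mapsto a \ot 1$, so by \Cref{propexact}(i) it is exact; symmetrically for $C^*_e(\mT_2)$.

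The main subtlety I would flag is the forward implication, where one must go from exactness of the ambient $C^*$-algebra $C^*_e(\mT_1) \ot_{C^*\text{-}\min} C^*_e(\mT_2)$ to exactness of $C^*_e(\mT_1 \ot_{\min} \mT_2)$, since these need not coincide (\Cref{lemma} is not assumed to apply in the forward direction). The bridge is the two-step argument through the intermediate $C^*$-cover $C^*(\mT_1 \ot_{\min} \mT_2)$: subalgebra first, then quotient, each time invoking the appropriate part of \Cref{propexact}.
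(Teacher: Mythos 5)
Your proof is correct, and the converse direction coincides with the paper's; but your forward direction takes a genuinely different route. The paper constructs the embedding of $\mT_1 \ot_{\min} \mT_2$ into $\mO_2$ explicitly: it applies Kirchberg's exact embedding theorem to each factor to get $\phi_i : C^*_e(\mT_i) \hookrightarrow \mO_2$, tensors these using injectivity of $\min$ and of $C^*\text{-}\min$, and then invokes Elliott's self-absorption $\mO_2 \ot_{C^*\text{-}\min} \mO_2 \cong \mO_2$ (\Cref{elliot}) to land back in $\mO_2$. You instead prove that $C^*_e(\mT_1 \ot_{\min} \mT_2)$ is exact --- by embedding $\mT_1 \ot_{\min} \mT_2$ into the exact algebra $C^*_e(\mT_1) \ot_{C^*\text{-}\min} C^*_e(\mT_2)$, passing to the generated $C^*$-cover (subalgebra, \Cref{propexact}(i)), and then to its quotient $C^*_e(\mT_1 \ot_{\min} \mT_2)$ (\Cref{propexact}(ii)) --- and only then apply \Cref{embed}. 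This is essentially the same subalgebra-then-quotient device the paper uses inside the proof of \Cref{embed} itself, redeployed here. What your route buys is independence from Elliott's theorem $\mO_2 \ot \mO_2 \cong \mO_2$, plus the stronger intermediate conclusion that $C^*_e(\mT_1 \ot_{\min} \mT_2)$ is exact without any simplicity or enough-unitaries hypothesis; what the paper's route buys is a concrete embedding and a proof that generalizes immediately to $m$ factors via $\ot_{i=1}^m \mO_2 \cong \mO_2$, which is how the subsequent corollary is obtained. Your flag about not being able to invoke \Cref{lemma} in the forward direction is exactly right, and your bridge through the intermediate $C^*$-cover handles it correctly. (Minor point: the complete order embedding of $\mT_1 \ot_{\min} \mT_2$ into $C^*_e(\mT_1) \ot_{C^*\text{-}\min} C^*_e(\mT_2)$ rests on injectivity of $\min$ together with the identification of the operator system $\min$ of $C^*$-algebras with $C^*\text{-}\min$, not on functoriality alone; also, the paper's citation of \Cref{propexact}(iii) for the converse step is a typo for (i), and you cite the correct item.)
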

\begin{proof} Let $C^*_e(\mT_1)$ and $C^*_e(\mT_2)$ be exact $C^*$-algebras. Then using the Kirchberg's embedding theorem (\Cref{embed}) there exist complete order embeddings, $\phi_1 : C^*_e(\mT_1) \hookrightarrow \mO_2 $ and $\phi_2 : C^*_e(\mT_2) \hookrightarrow \mO_2.$ Since $C^*\text{-}\min$ is injective, we have the complete order isomorphism 
\begin{equation}\label{eqn1}
\phi_1  \ot_{\min} \phi_2 : C^*_e(\mT_1) \ot_{C^*\text{-}\min} C^*_e(\mT_2) \hookrightarrow \mO_2 \ot_{C^*\text{-}\min} \mO_2.
\end{equation}
Also, operator system $\min$ tensor product is injective \cite[Theorem 4.6]{KPTT1}, so that using the natural complete order inclusions $i_{\mT_1}$ and $i_{\mT_2}$ of $\mT_1$ and $\mT_2$ into their respective $C^*$-envelopes, we have the complete order isomorphism $i_{\mT_1} \ot i_{\mT_2}$ of  $\mT_1 \ot_{\min} \mT_2$ into $C^*_e(\mT_1) \ot_{\min} C^*_e(\mT_2)$. Further, since operator system $\min$ tensor product of $C^*$-algebras embeds complete order isomorphically into their $C^*\text{-}\min$ tensor product (\cite[Corollary 4.10]{KPTT1}), the complete order isomorphism can be considered as 
\begin{equation}\label{eqn2}
 i_{\mT_1} \ot_{\min} i_{\mT_2} : \mT_1 \ot_{\min} \mT_2 \hookrightarrow  C^*_e(\mT_1) \ot_{C^*\text{-}\min} C^*_e(\mT_2).
 \end{equation}
Using the isomorphism $\mO_2 \ot_{C^*\text{-}\min}  \mO_2 \cong \mO_2$ (\Cref{elliot}) and the composition of complete order isomorphisms in \Cref{eqn1,eqn2}, we have the required complete order isomrphism $$ \mT_1 \ot_{\min} \mT_2 \ \hookrightarrow    \mO_2.$$
Conversely, let there be an embedding of $ \mT_1 \ot_{\min}
\mT_2$ into $\mO_2$. In case, $C^*_e(\mT_i)$ is simple for $i=1,2$ or each $\mT_i$, $i=1,2$, contains enough unitaries of $C^*_e(\mT_i)$, respectively, then using \Cref{lemma}, $$C^*_e(\mT_1 \ot_{\min} \mT_2) \cong C^*_e(\mT_1) \ot_{C^*\text{-}\min} C^*_e(\mT_2),$$ which is separable (being the minimal $C^*$-tensor product of separable $C^*$-algebra). Thus \Cref{embed} implies that $C^*_e(\mT_1) \ot_{C^*\text{-}\min} C^*_e(\mT_2)$ is exact and hence, for each $i$, the $C^*$-subalgebras $C^*_e(\mT_i)$ (through the injective $*$-homomorphisms $C^*_e(\mT_1) \ni a_1 \mapsto a_1 \ot 1 \in C^*_e(\mT_1) \ot_{C^*\text{-}\min} C^*_e(\mT_2)$ and $C^*_e(\mT_2) \ni a_2 \mapsto 1 \ot a_2 \in C^*_e(\mT_1) \ot_{C^*\text{-}\min} C^*_e(\mT_2)$) is exact (\Cref{propexact}(iii)).
\end{proof}

Since $\min$ is associative (\cite[Theorem 4.6]{KPTT1}), preceding proposition can be extended to finite tensor product.

\begin{corollary}
Let $\mT_1,\mT_2,\cdots,\mT_m$; $m \in \mathbb{N}$ be separable operator systems. If, for each $i:1\leq i\leq m$, $C^*_e(\mT_i)$ is exact, then the operator system $ \mT_1 \ot_{\min} \mT_2 \ot_{\min} \cdots \ot_{\min} \mT_m$ embeds into $\mO_2$. Converse holds, if either $C^*_e(\mT_i)$ is simple for all $i$ or each $\mT_i$, $1 \leq i \leq m$, contains enough unitaries of $C^*_e(\mT_i)$, respectively.
\end{corollary}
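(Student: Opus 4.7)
My plan is to treat the two directions separately, and in both cases to reduce to Proposition \ref{corone} together with the associativity of the minimal tensor product stated in \cite[Theorem 4.6]{KPTT1}.

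For the forward implication, rather than inducting I would mimic the direct argument in Proposition \ref{corone}. Assuming each $C^*_e(\mT_i)$ is exact and separable, Kirchberg's theorem (\Cref{kirchexact}) provides complete order embeddings $\phi_i : C^*_e(\mT_i) \hookrightarrow \mO_2$. Injectivity of the $C^*$-minimal tensor product and of the operator system minimal tensor product (\cite[Theorem 4.6, Corollary 4.10]{KPTT1}), together with the natural inclusions $i_{\mT_j}: \mT_j \hookrightarrow C^*_e(\mT_j)$, yield a complete order embedding
$$\mT_1 \ot_{\min} \cdots \ot_{\min} \mT_m \hookrightarrow C^*_e(\mT_1) \ot_{C^*\text{-}\min} \cdots \ot_{C^*\text{-}\min} C^*_e(\mT_m) \hookrightarrow \mO_2 \ot_{C^*\text{-}\min} \cdots \ot_{C^*\text{-}\min} \mO_2.$$
A finite induction using Elliot's self-absorbing property (\Cref{elliot}) collapses the right-hand side to $\mO_2$.

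For the converse, I would induct on $m$ with $m = 2$ handled by Proposition \ref{corone}. Writing $\mS := \mT_1 \ot_{\min} \cdots \ot_{\min} \mT_{m-1}$, by associativity of $\min$ the hypothesis gives an embedding $\mS \ot_{\min} \mT_m \hookrightarrow \mO_2$, so I would like to invoke Proposition \ref{corone} with the pair $(\mS,\mT_m)$. The point to check is that the regularity assumption I started with is inherited by $\mS$. In the simplicity case, iterated application of \Cref{lemma} gives $C^*_e(\mS) \cong C^*_e(\mT_1) \ot_{C^*\text{-}\min} \cdots \ot_{C^*\text{-}\min} C^*_e(\mT_{m-1})$, which is simple by \Cref{propexact}(iv); in the enough unitaries case, unitaries in $\mT_1, \ldots, \mT_{m-1}$ produce unitaries of the form $u_1 \ot \cdots \ot u_{m-1}$ in $\mS$, and these generate $C^*_e(\mT_1) \ot_{C^*\text{-}\min} \cdots \ot_{C^*\text{-}\min} C^*_e(\mT_{m-1}) \cong C^*_e(\mS)$ by \cite[Proposition 5.6]{KPTT2} (applied iteratively). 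Hence Proposition \ref{corone} applies and yields exactness of $C^*_e(\mS)$ and of $C^*_e(\mT_m)$; unpacking $C^*_e(\mS)$ by the induction hypothesis then yields exactness of each individual $C^*_e(\mT_i)$.

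The main obstacle, such as it is, lies entirely in this bookkeeping for the converse: one must verify that the two alternative hypotheses (simplicity of all $C^*_e(\mT_i)$, and enough unitaries in each $\mT_i$) are preserved under iterated minimal tensor products so that Proposition \ref{corone} can be re-applied at each stage. The forward direction is essentially formal once \Cref{elliot} is iterated.
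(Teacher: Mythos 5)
Your proposal is correct and follows essentially the same route as the paper: the forward direction iterates Kirchberg's embedding, injectivity of the minimal tensor products, and Elliot's $\mO_2 \ot_{C^*\text{-}\min} \mO_2 \cong \mO_2$, while the converse rests on extending \Cref{lemma} to finitely many factors and then deducing exactness of each $C^*_e(\mT_i)$ as a $C^*$-subalgebra of the exact envelope of the full tensor product via \Cref{embed}. The only organizational difference is that you run the converse as an induction through \Cref{corone} (checking that simplicity, respectively enough unitaries, persists under tensoring), whereas the paper applies the $m$-factor version of \Cref{lemma} in a single step; both verifications amount to the same use of \Cref{propexact}(iv) and \cite[Proposition 5.6]{KPTT2}.
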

\begin{proof}
Let $C^*_e(\mT_i)$ be exact for all $i=1,2,\cdots,m$. Then using the associativity of $\min$ and $C^*\text{-}\min$,
and the complete order isomorphism $\ot_{i=1}^{m} \mO_2 \cong \mO_2$ (\cite[Corollary 5.2.4]{rordam}) in the \Cref{corone}, we have the required complete order isomorphism as $$\mT_1 \ot_{\min} \mT_2 \ot_{\min} \cdots \ot_{\min} \mT_m \hookrightarrow \mO_2.$$
For the converse, the associativity of $\min$ and $C^*\text{-}\min$, extends the \Cref{lemma} to finite factors, so that if either $C^*_e(\mT_i)$ is simple for all $i$ or each $\mT_i$, $1 \leq i \leq m$, contains enough unitaries of $C^*_e(\mT_i)$, respectively, $$C^*_e(\mT_1 \ot_{\min} \mT_2 \ot_{\min} \cdots \ot_{\min} \mT_m ) \cong C^*_e(\mT_1) \ot_{C^*\text{-}\min} C^*_e(\mT_2) \ot_{C^*\text{-}\min} \cdots \ot_{C^*\text{-}\min} C^*_e(\mT_m).$$
So that the embedding $\mT_1 \ot_{\min} \mT_2 \ot_{\min} \cdots \ot_{\min} \mT_m$ into $\mO_2$, implies the exactness of $C^*_e(\mT_1 \ot_{\min} \mT_2 \ot_{\min} \cdots \ot_{\min} \mT_m ) $ (\Cref{embed}), and hence of each of its $C^*$-subalgebra $C^*_e(\mT_i);$ $i=1,2,\cdots,m$.
\end{proof}

Nuclearity properties of operator systems have been characterized in terms of various intrinsic properties (see \cite{kavnuc}), and their relation with the nuclearity of their $C^*$-envelope (see \cite{opsysnuc}) have been studied recently.  Recall, a generalization of the
notion of WEP was introduced, in \cite{KPTT2},  and was called \textit{the Double Commutant Expectation Property (DCEP)}. An operator system $\mS$ is
said to have the DCEP if for every complete order embedding $\mS
\subset B(H)$ there exists a completely positive map $\varphi : B(H)
\ra \mS''$ fixing $\mS$. A $C^*$-algebra thus has  DCEP if and and only it has WEP. In the next corollary, we give some nuclearity properties of operator system embeddable in $\mO_2$.

\begin{corollary}
For a separable operator system $\mS$ having an embedding in $\mO_2$, we have \begin{enumerate}
\item[(i)] $\mS$ is exact, and hence $(\min,\mathrm{el})$-nuclear.
\item[(ii)] $C^*_e(\mS)$ is nuclear if and only if $C^*_e(\mS)$ has the DCEP. In this case, $\mS$ is $(\min,\mathrm{ess})$-nuclear.
\item[(iii)] If $\mS$ has enough unitaries of $C^*_e(\mS)$, $\mS$ is $(\min,\mathrm{ess})$-nuclear if and only if $C^*_e(\mS)$ has the DCEP (or WEP).
\end{enumerate} 
\end{corollary}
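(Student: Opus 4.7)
The plan is to chain together \Cref{embed}, Kirchberg's characterization of nuclear $C^*$-algebras as exact plus WEP, and the cited transfer results between $\mS$ and $C^*_e(\mS)$ in the presence of enough unitaries. For (i), I would note that $\mO_2$ is nuclear (\Cref{cuntz}), hence exact, and that exactness passes to any operator subsystem of an exact $C^*$-algebra. The $(\min,\mathrm{el})$-nuclearity conclusion is then the intrinsic characterization of exact operator systems from \cite{kavnuc}, so (i) follows immediately.

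For (ii), \Cref{embed} already delivers exactness of $C^*_e(\mS)$. Since DCEP coincides with WEP for $C^*$-algebras and Kirchberg's theorem identifies nuclear $C^*$-algebras as those that are exact and have WEP, the first equivalence is immediate. When $C^*_e(\mS)$ is nuclear, \cite[Proposition 4.2]{opsysnuc} (the tool already used in the remark following \Cref{cuntzopsysnuc}) yields $(\min,\mathrm{ess})$-nuclearity of $\mS$.

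For (iii), assume $\mS$ has enough unitaries of $C^*_e(\mS)$. The $(\Leftarrow)$ direction follows by combining (i) with \cite[Proposition 10.12]{KPTT2} to transfer exactness to $C^*_e(\mS)$; this together with the WEP assumption yields nuclearity of $C^*_e(\mS)$ via Kirchberg's theorem, and (ii) then delivers $(\min,\mathrm{ess})$-nuclearity. For $(\Rightarrow)$---which I see as the main obstacle---I plan to exploit that under enough unitaries the $\mathrm{ess}$ tensor product inherits its order structure from $C^*_e(\mS) \ot_{\max} C^*_e(\cdot)$ in a way that promotes $(\min,\mathrm{ess})$-nuclearity of $\mS$ to the DCEP of $\mS$; the latter then transfers to WEP of $C^*_e(\mS)$, again via \cite[Proposition 10.12]{KPTT2}. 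The delicate step is precisely extracting DCEP from $(\min,\mathrm{ess})$-nuclearity; everything else is a mechanical chaining of \Cref{embed}, Kirchberg's theorem, and the cited propositions.
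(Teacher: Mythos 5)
Parts (i) and (ii) of your argument coincide with the paper's proof: exactness of $\mS$ as a subsystem of the exact $C^*$-algebra $\mO_2$, the intrinsic characterization of exactness as $(\min,\mathrm{el})$-nuclearity, the identification of nuclearity with exactness plus WEP/DCEP, and \cite[Proposition 4.2]{opsysnuc} for the $(\min,\mathrm{ess})$-nuclearity conclusion. Your $(\Leftarrow)$ direction of (iii) is also sound, if slightly more roundabout than necessary.

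The genuine gap is the $(\Rightarrow)$ direction of (iii). You propose to ``promote $(\min,\mathrm{ess})$-nuclearity of $\mS$ to the DCEP of $\mS$'' and you yourself flag this extraction as the delicate step --- but you never supply it, and it is not a known general implication: the standard lattice results tie DCEP to $(\mathrm{el},\mathrm{c})$-nuclearity, not to anything involving $\mathrm{ess}$, and the position of $\mathrm{ess}$ relative to $\mathrm{el}$ and $\mathrm{c}$ is not such that this follows formally. So as written your plan for $(\Rightarrow)$ is a placeholder, not a proof. The paper closes this direction differently: it invokes \cite[Theorem 4.3]{opsysnuc}, which says that for an operator system containing enough unitaries of its $C^*$-envelope, $(\min,\mathrm{ess})$-nuclearity of $\mS$ is \emph{equivalent} to nuclearity of $C^*_e(\mS)$. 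With that in hand, $(\min,\mathrm{ess})$-nuclearity gives nuclearity of $C^*_e(\mS)$ directly, and nuclearity gives WEP/DCEP, so (iii) reduces entirely to (ii) with no need to pass through a DCEP statement about $\mS$ itself. If you want to salvage your route, you would need to prove (or cite) precisely that equivalence rather than the order-structure heuristic you sketch.
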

\begin{proof}
\begin{enumerate}{}
\item[(i)] Since exactness passes to operator subsystems \cite[Corollary 5.8.]{KPTT2} and  $(\min,\mathrm{el})$-nuclearity is equivalent to exactness of operator system(\cite[Theorem 5.7]{KPTT2}, we have (i) from \Cref{embed}.
\item[(ii)] A unital $C^*$-algebra is nuclear if and only if it is exact and has DCEP (\cite[$\S 17$]{pisier} and \cite[$\S 7$]{KPTT2}; and nuclearity of $C^*$-envelope implies $(\min,\mathrm{ess})$-nuclearity of operator system (\cite[Proposition 4.2]{opsysnuc}), thus \Cref{embed} implies the result.
\item[(iii)] For an operator system having enough unitaries in $C^*_e(\mS)$, $(\min,\mathrm{ess})$-nuclearity is equivalent to nuclearity of $C^*_e(\mS)$ (\cite[Theorem 4.3]{opsysnuc}), therefore (iii) follows from (ii).
\end{enumerate}\end{proof}


\section{Tensor product with $\mS_2$ and $\mS_{\infty}$}\label{s:4}

Next we give a characterization of those operator systems, which get absorbed while considering the $C^*$-envelope of the operator system obtained by tensoring them finitely many times with $\mS_2$, in terms of their $C^*$-envelopes. 

\begin{proposition}\label{prop1} 
For a separable operator system $\mS$ with simple $C^*$-envelope, $C^*_e(\mS \ot_{\min=\mathrm{c}} \mS_2) \cong \mathcal{O}_2$ if and only if $C^*_e(\mS)$ is a nuclear $C^*$-algebra.
\end{proposition}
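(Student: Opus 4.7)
The plan is to reduce the statement to a direct application of Kirchberg's absorption theorem (\Cref{kirch2}) via the $C^*$-envelope identification in \Cref{lemma}. Since $C^*_e(\mS_2) = \mO_2$ is simple (\Cref{cuntz,cntznvlp}) and $C^*_e(\mS)$ is simple by hypothesis, the first case of \Cref{lemma} applies and gives the key identification
\[
C^*_e(\mS \ot_{\min} \mS_2) \;\cong\; C^*_e(\mS) \ot_{C^*\text{-}\min} \mO_2.
\]
Note that the notation $\ot_{\min=\mathrm{c}}$ is justified because $\mS_2$ is $(\min,\mathrm{c})$-nuclear by \Cref{cuntzopsysnuc}, so $\mS \ot_{\min} \mS_2 = \mS \ot_{\mathrm{c}} \mS_2$ as operator systems. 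Also, $C^*_e(\mS)$ is unital and separable, the latter because $\mS$ is separable and $C^*_e(\mS)$ is generated by $\mS$.

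For the forward direction ($C^*_e(\mS)$ nuclear implies $C^*_e(\mS \ot_{\min=\mathrm{c}} \mS_2) \cong \mO_2$), I would invoke \Cref{kirch2}: since $C^*_e(\mS)$ is unital, simple, separable, and nuclear, Kirchberg's absorption theorem gives $C^*_e(\mS) \ot_{C^*\text{-}\min} \mO_2 \cong \mO_2$, and combining this with the identification above yields $C^*_e(\mS \ot_{\min=\mathrm{c}} \mS_2) \cong \mO_2$.

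For the converse, assume $C^*_e(\mS \ot_{\min=\mathrm{c}} \mS_2) \cong \mO_2$. Using the identification above, we obtain $C^*_e(\mS) \ot_{C^*\text{-}\min} \mO_2 \cong \mO_2$. Applying the ``only if'' part of \Cref{kirch2} forces $C^*_e(\mS)$ to be unital, simple, separable, and nuclear; in particular, $C^*_e(\mS)$ is nuclear.

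There is no real obstacle here: once \Cref{lemma} is used to reduce the $C^*$-envelope of the operator system tensor product to a $C^*$-minimal tensor product of $C^*$-envelopes, the equivalence becomes exactly a restatement of Kirchberg's $\mO_2$-absorption theorem. The only care needed is to confirm the separability and unitality of $C^*_e(\mS)$, and to note that the hypothesis that $C^*_e(\mS)$ is simple is precisely what lets us apply \Cref{lemma} in its simple-$C^*$-cover form.
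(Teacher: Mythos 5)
Your proof is correct and follows essentially the same route as the paper: apply \Cref{lemma} (in its simple-$C^*$-cover form, using simplicity of $C^*_e(\mS)$ and of $\mO_2$) to identify $C^*_e(\mS \ot_{\min} \mS_2)$ with $C^*_e(\mS) \ot_{C^*\text{-}\min} \mO_2$, and then invoke both directions of Kirchberg's absorption theorem (\Cref{kirch2}). Your added remarks on separability, unitality, and the justification of the $\min=\mathrm{c}$ notation via \Cref{cuntzopsysnuc} are accurate points the paper leaves implicit.
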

\begin{proof}
Since $C^*_e(\mS)$ is simple, using \Cref{lemma}, $$C^*_e(\mS) \ot_{C^*\text{-}\mathrm{min}} C^*_e(\mS_2) \cong C^*_e(\mS \ot_{\min} \mS_2) \cong \mathcal{O}_2,$$ and thus $C^*_e(\mS)$ is nuclear (\Cref{kirch2}).\\
Conversely, let $C^*_e(\mS)$ be a nuclear $C^*$-algebra. Using \Cref{lemma}, we have $$C^*_e(\mS \ot_{\mathrm{min}} \mS_2) \cong C^*_e(\mS) \ot_{C^*\text{-}\mathrm{min}} C^*_e(\mS_2),$$ and then by \Cref{kirch2} and \Cref{cntznvlp}, we have that $$C^*_e(\mS \ot_{\mathrm{min}} \mS_2) \cong C^*_e(\mS_2) \cong \mO_2.$$
\end{proof}

We have given the proof for the operator system of the form $\mS \ot_{\min} \mS_2$, but it can be generalized to $\mS \ot_{\min} \ot_{i=1}^{m} \mS_2$, using the identification $\ot_{i=1}^{m} \mO_2 =\mO_2$ (\cite[Corollary 5.2.4]{rordam})

\begin{corollary}\label{cor4}
 For any simple, unital, separable and nuclear $C^*$-algebra $A,$ we have $C^*_e(A \ot_{\min=\mathrm{max}} \mS_2 ) \cong \mathcal{O}_2$.
 \end{corollary}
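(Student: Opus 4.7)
The plan is to reduce the statement to \Cref{prop1} by recognizing that a simple unital $C^*$-algebra is its own $C^*$-envelope. Concretely, any unital $C^*$-algebra $A$ is a $C^*$-cover of itself regarded as an operator system (via the identity map), so by the Hamana minimality property there is a surjective $*$-homomorphism $A \ra C^*_e(A)$ that fixes $A$. When $A$ is simple, this surjection has trivial kernel and is therefore a $*$-isomorphism (this is exactly the content of \Cref{simplecover}), giving $C^*_e(A) \cong A$.

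Next I would justify the tensor-product notation in the statement. Since $A$ is a $C^*$-algebra, the paper already notes that $A \ot_{c} \mS_2 = A \ot_{\max} \mS_2$, and nuclearity of $A$ as a $C^*$-algebra implies $A \ot_{\min} \mT = A \ot_{\max} \mT$ for every operator system $\mT$ (in particular for $\mS_2$). Consequently the tensor products $\min$, $c$, and $\max$ all coincide on $A \ot \mS_2$, so $A \ot_{\min=\max} \mS_2$ agrees with $A \ot_{\min=c} \mS_2$, which is the tensor product appearing in \Cref{prop1}.

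With these identifications in hand, I would simply invoke \Cref{prop1} with $\mS := A$. The hypotheses are immediate from the assumptions on $A$: $A$ is separable, $C^*_e(A) \cong A$ is simple, and $C^*_e(A) \cong A$ is nuclear. The proposition then yields
\[
C^*_e(A \ot_{\min=\mathrm{max}} \mS_2) \;=\; C^*_e(A \ot_{\min=\mathrm{c}} \mS_2) \;\cong\; \mO_2,
\]
which is the desired conclusion.

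There is no substantive obstacle here; the only subtle point is the bookkeeping in the first two paragraphs (identifying $C^*_e(A)$ with $A$ via \Cref{simplecover}, and reconciling the $\min=\max$ notation of the corollary with the $\min=c$ notation used in \Cref{prop1} via $(\min,\max)$-nuclearity of the $C^*$-algebra $A$). Once these are in place, the corollary is a direct specialization of \Cref{prop1}.
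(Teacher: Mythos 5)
Your proof is correct and follows essentially the same route as the paper: identify $C^*_e(A)$ with $A$ and then specialize \Cref{prop1}. The only cosmetic difference is that you derive $C^*_e(A)\cong A$ from simplicity via \Cref{simplecover}, whereas the paper cites the general fact that $C^*_e(A)=A$ for any unital $C^*$-algebra; your extra care in reconciling the $\min=\max$ and $\min=\mathrm{c}$ notations is a welcome clarification but not a different argument.
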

\begin{proof}
 Follows directly using \Cref{prop1} and the fact that $C^*_e(A)=A$ (\cite[Proposition 2.3]{opsysnuc}).
 \end{proof}

Recall from \cite[Definition 1.1.15]{rordam}, for unital $C^*$-algebras $A$ and $B$, two completely positive maps $\phi, \psi: A \ra B$, are said to be \emph{unitarily equivalent} if there is a unitary $u$ in $B$ such that $u\psi(a)u^* = \phi(a)$ for all $a\in A$, in symbols $\phi \sim_u \psi$.
 If for every $\varepsilon > 0$ and for every finite subset $F$ of $A$ there is a unitary $u$ in $B$ with $\|u\psi(a)u^*-\phi(a)\|\leq \varepsilon$
for all $a \in  F$, then $\phi$ and $\psi$ are said to be \emph{approximately unitarily
equivalent}, denoted by $\phi \approx_u \psi$. 
Approximate unitary equivalence of completely positive maps has been used extensively in \cite[Theorem 5.1.1]{rordam} and \cite[Theorem 6.3.8]{rordam} to prove various isomorphisms of $C^*$-algebras involving $\mO_2$.

\begin{corollary}\label{cor2} 
For a separable operator system $\mS$ with simple $C^*$-envelope, the following are equivalent :
\begin{enumerate}
 \item[(i)] $C^*_e(\mS)$ is exact.
 \item[(ii)] $\mS \ot_{\min=\mathrm{c}} \mS_2$ embeds into $\mO_2$.
 \item[(iii)] $C^*_e(\mS \ot_{\min=\mathrm{c}} \mS_2)$ is  exact.
\item[(iv)] $C^*_e(\mS \ot_{\min=\mathrm{c}} \mS_2)$ can be embedded in $\mO_2$ as a $C^*$-subalgebra.
\end{enumerate}
Moreover, if any one of the above holds, then there exist injective $*$-homomorphisms, $\rho :\mO_2 \ra C^*_e(\mS) \ot_{C^*\text{-}\min} \mO_2\cong C^*_e(\mS \ot_{\min=\mathrm{c}} \mS_2)$ and  $\gamma: C^*_e(\mS \ot_{\min=\mathrm{c}} \mS_2) \ra \mO_2$ such that $\gamma \circ \rho \approx_u id_{\mO_2}$. And, in addition, if $\rho \circ \gamma \approx_u id_{C^*_e(\mS \ot_{\min=\mathrm{c}} \mS_2)}$, then $C^*_e(\mS)$ is nuclear and $C^*_e(\mS\ot_{\min=\mathrm{c}} \mS_2) \cong \mO_2$.
\end{corollary}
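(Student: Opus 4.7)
The plan is to establish the cyclic implications (i) $\Rightarrow$ (ii) $\Rightarrow$ (iii) $\Rightarrow$ (iv) $\Rightarrow$ (i) by invoking results already proved in the paper. For (i) $\Rightarrow$ (ii), apply \Cref{corone} with $\mT_1 = \mS$ and $\mT_2 = \mS_2$, noting that $C^*_e(\mS_2) = \mO_2$ is exact by \Cref{cuntz}. For (ii) $\Rightarrow$ (iii), the simplicity of both $C^*_e(\mS)$ and $\mO_2$ lets \Cref{lemma} identify $C^*_e(\mS \ot_{\min} \mS_2) \cong C^*_e(\mS) \ot_{C^*\text{-}\min} \mO_2$, which is separable; \Cref{embed} then converts the embedding into exactness of this envelope. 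For (iii) $\Rightarrow$ (iv), \Cref{kirchexact} provides a unital $*$-embedding of the separable exact envelope into $\mO_2$. For (iv) $\Rightarrow$ (i), \Cref{propexact}(i) passes exactness from $\mO_2$ down to $C^*_e(\mS) \ot_{C^*\text{-}\min} \mO_2$ and then to its $C^*$-subalgebra $C^*_e(\mS) \ot 1 \cong C^*_e(\mS)$.

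For the ``Moreover'' statement, assume (i). \Cref{kirchexact} supplies an injective unital $*$-homomorphism $\psi : C^*_e(\mS) \hookrightarrow \mO_2$. Put $\rho(x) = 1 \ot x$, and let $\sigma : \mO_2 \ot_{C^*\text{-}\min} \mO_2 \xrightarrow{\cong} \mO_2$ be Elliott's isomorphism (\Cref{elliot}); set $\gamma = \sigma \circ (\psi \ot id_{\mO_2})$. Both $\rho$ and $\gamma$ are injective unital $*$-homomorphisms (the latter because the minimal $C^*$-tensor product is injective on $*$-monomorphisms). The composition $\gamma \circ \rho : \mO_2 \to \mO_2$ is then a unital $*$-endomorphism of $\mO_2$, and I will invoke the Kirchberg--Phillips uniqueness for $\mO_2$ (e.g.\ Theorems 5.1.1 and 6.3.8 of \cite{rordam}): every unital $*$-endomorphism of $\mO_2$ is approximately unitarily equivalent to the identity, so $\gamma \circ \rho \approx_u id_{\mO_2}$. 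This last step is the main obstacle, as it is not formal from earlier results in this paper and rests on deep $KK$-theoretic uniqueness for $\mO_2$.

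For the final conclusion, suppose additionally that $\rho \circ \gamma \approx_u id_{C^*_e(\mS) \ot_{C^*\text{-}\min} \mO_2}$. Together with $\gamma \circ \rho \approx_u id_{\mO_2}$, Elliott's approximate intertwining argument (see \cite{rordam}) upgrades the pair $(\rho, \gamma)$ to a genuine $*$-isomorphism $C^*_e(\mS) \ot_{C^*\text{-}\min} \mO_2 \cong \mO_2$. Then \Cref{kirch2}, applied to the simple, separable, unital $C^*$-algebra $C^*_e(\mS)$, forces $C^*_e(\mS)$ to be nuclear, and the envelope identification from \Cref{lemma} delivers $C^*_e(\mS \ot_{\min=\mathrm{c}} \mS_2) \cong \mO_2$.
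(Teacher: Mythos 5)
Your proposal is correct and follows essentially the same route as the paper: Proposition \ref{corone} for (i)$\Leftrightarrow$(ii), Theorem \ref{embed} for (ii)$\Leftrightarrow$(iii), Kirchberg's embedding theorem for (iii)$\Leftrightarrow$(iv), the same $\rho(x)=1\otimes x$ together with the uniqueness theorem for unital endomorphisms of $\mO_2$ for the ``moreover'' clause, and the approximate-intertwining isomorphism plus \Cref{kirch2}/\Cref{prop1} for the final conclusion. The only differences are cosmetic (a cyclic chain of implications rather than pairwise equivalences, and an explicit construction of $\gamma$ via $\psi\otimes id$ and Elliott's isomorphism, which the paper leaves implicit).
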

\begin{proof} 
Since, $C^*_e(\mS)$  and $C^*_e(\mS_2) = \mO_2$ are both simple and exact, by the converse of \Cref{corone} $\mS \ot_{\min} \mS_2 $ embeds into $\mO_2$. Also, if $\mS \ot_{\min} \mS_2$ embeds into $\mO_2$ then $C^*_e(\mS)$ is exact by \Cref{corone}. Thus, (i) and (ii) are equivalent.
\\
\Cref{embed} implies (ii) $\Longleftrightarrow$ (iii).
\\
Equivalence of (iii) and (iv) follows using Kirchberg's exact embedding theorem (\Cref{kirchexact}).
\\
Now, suppose $\mS$ satisfies any of the above. Let $\rho: \mO_2 \hookrightarrow C^*_e(\mS) \ot_{C^*\text{-}\min} C^*_e(\mS_2) = C^*_e(\mS \ot_{\min=\mathrm{c}} \mS_2)$; $a \mapsto 1 \ot a$ and $\gamma: C^*_e(\mS \ot_{\min=\mathrm{c}} \mS_2) \hookrightarrow \mO_2 $ be the injective $*$-homomorphism.
Using \cite[Theorem 5.1.1]{rordam}, we know that any injective $*$-homomorphism from $\mO_2$ into $\mO_2$, is approximately unitarily equivalent to $id_{\mO_2}$. So that $\gamma \circ \rho \approx_{u}  id_{\mO_2}$.
\\
In case we further have  $\rho \circ \gamma \approx_{u} id_{C^*_e(\mS \ot_{\min=\mathrm{c}} \mS_2)}$, then using \cite[Theorem 6.3.8]{rordam}(ii), $C^*_e(\mS \ot_{\min=\mathrm{c}} \mS_2)$ is isomorphic to $\mO_2$ and hence by \Cref{prop1} $C^*_e(\mS)$ is nuclear.
\end{proof}

\begin{remarks}
\emph{In case the complete order embedding $\mS \ot_{\min=\mathrm{c}} \mS_2$ into $\mO_2$ obtained in (ii) of the above corollary is such that $\mO_2$ is a $C^*$-cover, then trivially $C^*_e(\mS \ot_{\min=\mathrm{c}} \mS_2) = \mO_2$ (\Cref{simplecover}).}
\end{remarks}


Recall from \cite{rordam}, a simple $C^*$-algebra $A$ is said to be\textit{ purely infinite} if  $A$ is not isomorphic to $\mathbb{C}$ and for every pair of non-zero elements $a$ and $b$ in $A$ there exists $x$ in $A$ such that $b=x^*ax$. In fact there are six equivalent conditions that are used to define a unital and simple $C^*$-algebra to be purely infinite(\cite[Proposition 4.1.1]{rordam}.
\\\\
We now characterize those operator system whose $C^*$-envelopes remain unaffected by tensoring finitely many times with $\mS_{\infty}$. 
\\
Proof is on the same lines as that of \Cref{prop1} and uses the Kirchberg's characterization of simple, purely infinite, nuclear $C^*$-algebras (\Cref{Oinfinity}). 
 
\begin{proposition}\label{prop2} 
Let $\mS$ be a separable operator system with simple $C^*$-envelope $C^*_e(\mS).$ Then $C^*_e(\mS)$ is a  nuclear and purely infinite $C^*$-algebra if and only if $C^*_e(\mS \ot_{\min=\mathrm{c}} \mS_{\infty}) \cong C^*_e(\mS)$.
\end{proposition}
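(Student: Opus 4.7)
My plan is to mirror the proof of \Cref{prop1}, with \Cref{Oinfinity} taking the place of \Cref{kirch2}. The pivot point in both directions is the identification
\[
C^*_e(\mS \ot_{\min=\mathrm{c}} \mS_\infty) \;\cong\; C^*_e(\mS) \ot_{C^*\text{-}\min} \mO_\infty,
\]
delivered by \Cref{lemma}; its hypotheses are met because $C^*_e(\mS)$ is simple by standing assumption and $C^*_e(\mS_\infty) = \mO_\infty$ is simple by \Cref{cuntz} together with \Cref{cntznvlp}.

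For the forward direction, assume $C^*_e(\mS)$ is nuclear and purely infinite. Combined with the standing simplicity and separability hypotheses, \Cref{Oinfinity} yields $C^*_e(\mS) \ot_{C^*\text{-}\min} \mO_\infty \cong C^*_e(\mS)$, and concatenating this with the displayed identification gives the desired $C^*_e(\mS \ot_{\min=\mathrm{c}} \mS_\infty) \cong C^*_e(\mS)$.

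For the converse, starting from $C^*_e(\mS \ot_{\min=\mathrm{c}} \mS_\infty) \cong C^*_e(\mS)$, the displayed identification upgrades this to the $\mO_\infty$-absorption identity $C^*_e(\mS) \cong C^*_e(\mS) \ot_{C^*\text{-}\min} \mO_\infty$. The only-if direction of \Cref{Oinfinity} will then deliver pure infiniteness once nuclearity is in hand. The subtle point I anticipate, and what I expect to be the main obstacle, is that \Cref{Oinfinity} presumes nuclearity rather than concluding it: the absorption identity alone is not a priori enough to force it, so the reverse direction needs a supplementary argument showing that $C^*_e(\mS) \cong C^*_e(\mS) \ot_{C^*\text{-}\min} \mO_\infty$ implies nuclearity of $C^*_e(\mS)$. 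I would attempt this by exploiting the nuclearity of $\mO_\infty$ (\Cref{cuntz}), so that the $C^*$-minimal tensor product with $\mO_\infty$ carries enough approximation structure to transfer back to $C^*_e(\mS)$ through the isomorphism; once nuclearity is secured, \Cref{Oinfinity} closes the argument.
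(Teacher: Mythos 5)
The paper offers no written proof of this proposition; it only remarks that the argument runs ``on the same lines as \Cref{prop1}'' with \Cref{Oinfinity} replacing \Cref{kirch2}. Your proposal is therefore exactly the intended route: the identification $C^*_e(\mS \ot_{\min} \mS_\infty) \cong C^*_e(\mS) \ot_{C^*\text{-}\min} \mO_\infty$ via \Cref{lemma} is the right pivot, and your forward direction is complete and correct. You have also put your finger on precisely where the analogy with \Cref{prop1} breaks: \Cref{kirch2} \emph{characterizes} nuclearity by $\mO_2$-absorption, whereas \Cref{Oinfinity} \emph{assumes} nuclearity and only characterizes pure infiniteness within the nuclear class.

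The supplementary argument you propose to close this gap cannot work, however, because the implication you are after is false: for a unital, simple, separable $C^*$-algebra $A$, the absorption $A \cong A \ot_{C^*\text{-}\min} \mO_\infty$ does not force nuclearity. Take $A = C^*_r(F_2) \ot_{C^*\text{-}\min} \mO_\infty$. This is unital, simple (\Cref{propexact}(iv)) and separable, but not nuclear: a state $\phi$ on $\mO_\infty$ gives a ucp slice map $\mathrm{id} \ot \phi : A \to C^*_r(F_2)$ splitting the inclusion $a \mapsto a \ot 1$, so nuclearity of $A$ would pass to the non-nuclear $C^*_r(F_2)$. Yet $A \ot_{C^*\text{-}\min} \mO_\infty \cong C^*_r(F_2) \ot_{C^*\text{-}\min} (\mO_\infty \ot_{C^*\text{-}\min} \mO_\infty) \cong A$ by the self-absorption of $\mO_\infty$ that the paper itself invokes immediately after this proposition. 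Viewing this $A$ as an operator system $\mS$ (so that $C^*_e(\mS)=A$), \Cref{lemma} gives $C^*_e(\mS \ot_{\min} \mS_\infty) \cong A \ot_{C^*\text{-}\min} \mO_\infty \cong C^*_e(\mS)$ while $C^*_e(\mS)$ is not nuclear. So the ``nuclear'' half of the converse is genuinely false as stated, and no patch --- yours or the paper's --- can rescue it; the statement is only correct if nuclearity of $C^*_e(\mS)$ is added to the standing hypotheses (in which case \Cref{Oinfinity} applies directly and your argument goes through). What does survive without nuclearity is pure infiniteness, but that comes from a different theorem of Kirchberg ($B \ot_{C^*\text{-}\min} \mO_\infty$ is purely infinite for every simple $B$), not from \Cref{Oinfinity}.
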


Again we stated the last Proposition for operator system of the form $\mS \ot_{\min=\mathrm{c}} \mS_{\infty}$ but it can be generalized to operator system of the form $\mS \ot_{\min=\mathrm{c}} \ot_{i=1}^m \mS_{\infty}$, using the identification $\mO_{\infty} =\ot_{i=1}^{m} \mO_{\infty}$ (\cite[Theorem 7.2.6]{rordam}).
 
\begin{corollary} 
For a separable operator system $\mS$ with simple, nuclear and purely infinite $C^*$-envelope, there exist a complete order embedding of $\mS \ot_{\min=\mathrm{c}} \mS_{\infty} $ into $C^*_e(\mS)$.
\end{corollary}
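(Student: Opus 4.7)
The plan is to deduce this corollary almost immediately from \Cref{prop2}. The key observation is that every operator system has a canonical complete order embedding into its $C^*$-envelope, so it suffices to identify $C^*_e(\mS \ot_{\min=\mathrm{c}} \mS_{\infty})$ with $C^*_e(\mS)$ in a way that respects this inclusion.

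First I would note that by \Cref{cuntzopsysnuc}, $\mS_{\infty}$ is $(\min,\mathrm{c})$-nuclear, so the symbol $\mS \ot_{\min=\mathrm{c}} \mS_{\infty}$ is unambiguous, and I may work with the $\min$ tensor product throughout. Since $C^*_e(\mS)$ is simple by hypothesis and $C^*_e(\mS_{\infty}) = \mO_{\infty}$ is simple by \Cref{cntznvlp} and \Cref{cuntz}, \Cref{lemma} provides a $*$-isomorphism
$$C^*_e(\mS \ot_{\min} \mS_{\infty}) \cong C^*_e(\mS) \ot_{C^*\text{-}\min} \mO_{\infty}$$
that extends the natural inclusion $\mS \ot_{\min} \mS_{\infty} \hookrightarrow C^*_e(\mS) \ot_{C^*\text{-}\min} \mO_{\infty}$.

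Next I would invoke Kirchberg's $\mO_{\infty}$-absorption theorem (\Cref{Oinfinity}): $C^*_e(\mS)$ is simple, nuclear, separable (being generated by the separable operator system $\mS$), and purely infinite by hypothesis, hence
$$C^*_e(\mS) \ot_{C^*\text{-}\min} \mO_{\infty} \cong C^*_e(\mS).$$
Composing with the natural complete order inclusion $\mS \ot_{\min=\mathrm{c}} \mS_{\infty} \hookrightarrow C^*_e(\mS \ot_{\min=\mathrm{c}} \mS_{\infty})$ then gives the desired complete order embedding
$$\mS \ot_{\min=\mathrm{c}} \mS_{\infty} \hookrightarrow C^*_e(\mS \ot_{\min} \mS_{\infty}) \cong C^*_e(\mS) \ot_{C^*\text{-}\min} \mO_{\infty} \cong C^*_e(\mS).$$

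Since this is essentially a reformulation of \Cref{prop2} in terms of complete order embeddings rather than $C^*$-envelopes, there is no real obstacle to address; the only subtlety is to note that the identification at the level of $C^*$-envelopes transports the inclusion of $\mS \ot_{\min=\mathrm{c}} \mS_{\infty}$ correctly, which is guaranteed by the construction in \Cref{lemma}.
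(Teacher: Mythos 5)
Your proof is correct and follows essentially the same route the paper intends: the corollary is stated as an immediate consequence of \Cref{prop2}, and you simply unfold that proposition (via \Cref{lemma} and Kirchberg's $\mO_{\infty}$-absorption theorem) and compose with the canonical complete order embedding of $\mS \ot_{\min=\mathrm{c}} \mS_{\infty}$ into its $C^*$-envelope. No gaps.
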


Using the fact that for a unital $C^*$-algebra $C^*_e(A)=A$ (\cite[Proposition 2.3]{opsysnuc}), we have:

\begin{corollary}\label{cor3} 
For any unital, simple, nuclear, separable and purely infinite $C^*$-algebra $A$, $C^*_e(A \ot_{\min=\mathrm{max}} \mS_{\infty}) \cong A.$ 
\end{corollary}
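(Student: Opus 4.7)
The plan is to deduce this corollary directly from \Cref{prop2} by taking $\mS = A$ and then justifying that in this situation the $\min = \mathrm{c}$ tensor product agrees with the $\min = \max$ tensor product indicated in the statement. The first step is to observe that since $A$ is unital, $C^*_e(A) = A$ by \cite[Proposition 2.3]{opsysnuc}; hence the hypothesis that $A$ is simple, nuclear, and purely infinite translates immediately into the statement that $A$, regarded as an operator system, has a simple $C^*$-envelope which is nuclear and purely infinite.

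Next, I would reconcile the tensor products. \Cref{prop2} gives its conclusion in terms of $\ot_{\min=\mathrm{c}}$, whereas the corollary is stated with $\ot_{\min=\mathrm{max}}$. Because the first factor is a $C^*$-algebra, \cite[Theorem 6.7]{KPTT1} (recalled in the preliminaries) yields $A \ot_{\mathrm{c}} \mS_{\infty} = A \ot_{\max} \mS_{\infty}$, and \Cref{cuntzopsysnuc} guarantees $A \ot_{\min} \mS_{\infty} = A \ot_{\mathrm{c}} \mS_{\infty}$ since $\mS_{\infty}$ is $(\min, \mathrm{c})$-nuclear. Combining these two identifications, the three operator system structures on $A \ot \mS_{\infty}$ coincide, so $\min = \mathrm{c} = \max$ in this setting and writing $\ot_{\min = \max}$ is legitimate.

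Finally, I apply \Cref{prop2} directly to the separable operator system $\mS = A$: its $C^*$-envelope is $A$ itself, which is simple, nuclear and purely infinite, so the proposition yields
\[
C^*_e(A \ot_{\min=\mathrm{c}} \mS_{\infty}) \cong C^*_e(A) = A,
\]
and by the identification of tensor products above, this is exactly $C^*_e(A \ot_{\min=\max} \mS_{\infty}) \cong A$. There is essentially no obstacle here; the only subtle point is the notational translation between $\mathrm{c}$ and $\max$, which is resolved by the permanence properties already recalled in \Cref{s:2}.
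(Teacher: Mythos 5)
Your proof is correct and takes essentially the same route as the paper, which obtains the corollary by applying \Cref{prop2} to $\mS = A$ together with the fact that $C^*_e(A)=A$ for a unital $C^*$-algebra. Your additional reconciliation of $\ot_{\min=\mathrm{c}}$ with $\ot_{\min=\mathrm{max}}$, via the $(\min,\mathrm{c})$-nuclearity of $\mS_{\infty}$ and \cite[Theorem 6.7]{KPTT1}, is a point the paper leaves implicit but does not change the argument.
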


\begin{corollary} 
For a separable operator system $\mS$ with simple, nuclear $C^*$-envelope $C^*_e(\mS)$, if $\mS \cong \mS \ot_{\mathrm{min}} \mS_{\infty} $, then $\mS$ is infinite dimensional and $C^*_e(\mS)$ is purely infinite.
\end{corollary}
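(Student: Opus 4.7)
The plan is to use the two structural tools that Section 2 has already set up, namely \Cref{extendiso} (extending a complete order isomorphism to a $*$-isomorphism of $C^*$-envelopes) and \Cref{lemma} (identifying the $C^*$-envelope of a $\min$-tensor product when both factors have simple $C^*$-envelopes), to show that $C^*_e(\mS)$ is an $\mO_\infty$-absorbing $C^*$-algebra, and then invoke Kirchberg's absorption theorem (\Cref{Oinfinity}) to conclude pure infiniteness. Essentially this is the same template as the proofs of \Cref{prop1} and \Cref{prop2}, now applied in the $\mO_\infty$ direction.

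The execution proceeds in three short steps. First, since the hypothesis $\mS \cong \mS \ot_{\min} \mS_\infty$ is a complete order isomorphism, \Cref{extendiso} lifts it to a $*$-isomorphism
\[ C^*_e(\mS) \;\cong\; C^*_e(\mS \ot_{\min} \mS_\infty). \]
Second, by hypothesis $C^*_e(\mS)$ is simple, and $C^*_e(\mS_\infty) = \mO_\infty$ is simple by \Cref{cuntz} together with \Cref{cntznvlp}; so \Cref{lemma} gives
\[ C^*_e(\mS \ot_{\min} \mS_\infty) \;\cong\; C^*_e(\mS)\ot_{C^*\text{-}\min} \mO_\infty. \]
Chaining these, $C^*_e(\mS) \cong C^*_e(\mS)\ot_{C^*\text{-}\min}\mO_\infty$. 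Since $\mS$ is separable and generates $C^*_e(\mS)$, the envelope is separable as well, and is simple and nuclear by assumption; thus \Cref{Oinfinity} applies and forces $C^*_e(\mS)$ to be purely infinite.

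For the infinite-dimensionality of $\mS$, a complete order isomorphism is in particular a linear isomorphism, so $\dim \mS = \dim(\mS \ot_{\min}\mS_\infty)$. The underlying vector space of the operator system minimal tensor product is the algebraic tensor product $\mS \otimes \mS_\infty$, and since $\mS \supseteq \C \cdot 1$ is nonzero while $\mS_\infty$ is infinite-dimensional, the right-hand side is infinite-dimensional; hence so is $\mS$. (Alternatively, a purely infinite simple $C^*$-algebra cannot be a matrix algebra, so $C^*_e(\mS)$ and consequently $\mS$ must be infinite-dimensional.)

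I do not anticipate any real obstacle: all the heavy lifting is done by \Cref{Oinfinity}, and the two prerequisite facts needed to invoke it, namely the $\mO_\infty$-absorption of $C^*_e(\mS)$ and the separability/simplicity/nuclearity hypotheses, are immediate from the given data and \Cref{extendiso}, \Cref{lemma}. The only point requiring care is to confirm that \Cref{lemma} is applicable — which it is, via the simplicity clause of its hypothesis — and to remember that separability of $\mS$ transfers to $C^*_e(\mS)$.
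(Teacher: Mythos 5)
Your argument is correct and is essentially the paper's own route: the paper applies \Cref{extendiso} and then simply cites \Cref{prop2}, which packages exactly your second and third steps (the \Cref{lemma} identification $C^*_e(\mS\ot_{\min}\mS_\infty)\cong C^*_e(\mS)\ot_{C^*\text{-}\min}\mO_\infty$ followed by \Cref{Oinfinity}). One small caveat: your parenthetical alternative for infinite-dimensionality is invalid, since a finite-dimensional operator system can have an infinite-dimensional, purely infinite $C^*$-envelope (e.g.\ $\mS_2\subset\mO_2$); keep instead your primary count, namely that $1\otimes\mS_\infty\subseteq\mS\otimes\mS_\infty$ forces $\dim\mS=\dim(\mS\ot_{\min}\mS_\infty)=\infty$.
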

\begin{proof}
If $\mS \ot_{\mathrm{min}} \mS_{\infty} \cong \mS$, proof follows from \Cref{extendiso} and \Cref{prop2}. 
\end{proof}

\begin{remarks}
\emph{Converse of the above corollary is not known. Note that $\mS_n$ is a finite dimensional operator system with purely infinite and simple $C^*_e(\mS_n)=\mathcal{O}_n$(\Cref{cuntz}). But 
 $\mS_n \ncong \mS_n \ot_{\min} \mS_{\infty}$.} 
 \end{remarks}


\section{Applications}\label{s:5} 
 
The results proved in this article can be applied to some recently introduced operator systems with known $C^*$-envelopes to check their embeddability in $\mO_2$, and to describe the $C^*$-envelopes of some operator systems obtained after tensoring them with $\mS_2$ or $\mS_{\infty}$.
\\
Recall from \cite{disgrp}, an operator system $\mS(\mF)$ was associated to $C^*(G)$, the full group $C^*$-algebra
of the group $G$ for countable discrete group $G$,
$\mF$ being a generating set of $G$ as; $\mS(\mF) := \text{span}\{1, u, u^*:\ u \in \mF\}
\subset C^*(G)$; it was shown in \cite[Proposition 2.2]{disgrp} that $C^*_e(\mS(\mF))=C^*(G)$. On the similar lines, another natural operator system was associated to reduced group $C^*$-algebra in \cite{opsysnuc}, namely,
$\mS_r(\mF) := \text{span}\{1, u, u^*:\ u \in \mF\} \subset
C^*_r(G)$. Further, $C^*_e(\mS_r(\mF))=C^*_r(G)$ (see \cite[Proposition 2.9]{opsysnuc}).
\\
Kavruk et al.~in
\cite{KPTT1} associated an operator system to a finite graph $G$ with $n$-vertices, $\mS_G$ as the finite
dimensional operator subsystem of $M_n(\C)$ given by
 $\mS_G=\mathrm{span} \{\{E_{i,j}: (i,j) \in G\}\cup\{E_{i,
  i}: 1 \leq i \leq n\} \} \subseteq M_n(\C),$ where $\{E_{i,j}\}$ is
the standard system of matrix units in $M_n(\C)$ and $(i, j)$ denotes
(an unordered) edge in $G$. From the proof of \cite[Theorem 3.2]{ortiz}, we now know that, for a connected graph $G$ on $n$-vertices, $C^*_e(\mS_G) =M_n$.

\begin{example}
\emph{As an application of \Cref{embed}, the following operator systems embed into $\mO_2$}:
\begin{enumerate}
\item[(i)] The operator system $\mS(\mathfrak{u}) \subseteq C^*(G)$; where $G$ is a finitely generated discrete amenable group.
\item[(ii)] $\mS_r(\mathfrak{u}) \subseteq C^*_r(G)$; where $G$ is any exact discrete group. In particular for $G=F_n$, the free group on $n$-generators, $\mS_r(\mathfrak{u}_n) \subset C^*_r(F_n)$ embeds into $\mO_2$.
\item[(iii)] $\mS_G \subset M_n$; where $G$ is a connected graph on $n$-vertices embed into $\mO_2$.
\end{enumerate}
\emph{On the other hand, $\mathcal{S}(\mathfrak{u}_n) \subseteq C^*(F_n)$ does not embed into $\mO_2$.}
\end{example}

The following applications are immediate from \Cref{cuntz}, \Cref{cuntzopsysnuc} and \Cref{cor4}.

\begin{example}
\emph{$C^*_e(\mS_n \ot_{\min=\mathrm{c}} \mS_2) \cong \mathcal{O}_2 $ for all $2 \leq n \leq \infty$.}
\end{example}

Similarly, as a direct application of \Cref{prop2}, \Cref{cntznvlp} and \Cref{cuntz}, we have:

\begin{example}
\emph{$C^*_e(\mS_n \ot_{\min=\mathrm{c}} \mS_{\infty} ) \cong \mO_n$ for all $2 \leq n \leq \infty$.}
\end{example}

\begin{example}
\emph{$C^*_e(M_n \ot_{\min=\mathrm{c}} \mS_2) \cong \mathcal{O}_2$ for all $n \in \mathbb{N}$.}
\end{example}

We know that $C^*(G)$ is never simple unless $G=\mathbb{C}$; as it always has a one dimensional quotient coming from the trivial representation of $G$; and has an ideal of co-dimension $1$, called \emph{the augmented ideal}. But for $n \geq 2$; $C^*_r(F_n)$(the reduced group algebra of free group with $n$-generator) is always simple. 

\begin{example} 
\emph{Consider $\mS_r(\mathfrak{u}_n) \subseteq C^*_r(F_n)$ for $n \geq 2$, then $C^*_e(\mS_r(\mathfrak{u}_n)) =C^*_r(F_n)$, which is simple, separable, unital and exact but not nuclear, then $C^*_e(\mS_r(\mathfrak{u}_n) \ot_{\min} \mathcal{S}_n) \cong C^*_r(F_n) \ot_{\min} \mathcal{O}_2$ is a proper $C^*$-subalgebra of $\mathcal{O}_2$.}
\end{example}

\begin{example}
\emph{For a connected graph $G$ on $n$-vertices, $C^*_e(\mS_{G} \ot_{\min} \mS_2)  \cong M_n \ot_{C^*\text{-}\min} \mathcal{O}_2 \cong \mathcal{O}_2$; where $\mathcal{S}_G$ is the graph operator system.}
\end{example}

Argerami and
Farenick \cite{argerami1,argerami2} defined operator systems generated by a single bounded linear operator $T$ acting
on a complex Hilbert space $\mH$ as the unital
self-adjoint subspace $\mO\mS(T) = \text{span} \{ 1, T, T^*\} \subset
\B.$

\begin{example}
\emph{Recall from \cite[Proposition 3.2]{argerami1}, for $\C^\times :=\C \diagdown \{0\}$ and $\xi= (\xi_1, \xi_2, \ldots,
\xi_d) \in (\C^\times)^d$, the irreducible weighted
  unilateral shift with weights $\xi_1, \xi_2, \ldots, \xi_d $ is the
operator $W(\xi)$ on $\C^{d+1}$ given by the
matrix 
$$W(\xi)=\begin{bmatrix} 
0 & & & & 0\\ 
\xi_1 & 0 & & & \\ 
& \xi_2 & \ddots & & \\ 
& & \ddots & 0 & \\ 
& & & \xi_d & 0
\end{bmatrix},$$ 
and
$C^*_e(\mO\mS({W(\xi)}))=M_{d+1}(\C)$.
\\
Therefore, $\mO\mS({W(\xi)})$ and $ \mO\mS({W(\xi)}) \ot_{\mathrm{min}} \mS_2$ embed into $\mO_2$ using \Cref{cor2} and $C^*_e(\mO\mS({W(\xi)}) \ot_{\min} \mS_2)\cong \mO_2$ using \Cref{prop1}.}
\end{example}

An operator $J$ on an $n$-dimensional Hilbert space $\mH$
is a basic Jordan block if there is an orthonormal basis of $\mH$ for
which $J$ has a matrix representation of the form $$J_n(\lambda)
:= \begin{bmatrix} \lambda & 1 & 0 & \ldots & 0 \\ 0 & \lambda & 1 &
  \ddots & \vdots \\ \vdots & \ddots & \ddots & \ddots & 0\\ \vdots &
  & \ddots & \ddots & 1 \\ 0 & \ldots & \ldots & 0 & \lambda
\end{bmatrix}$$
for some $\lambda \in \C.$

\begin{example}
\emph{By \cite[Proposition 2.2]{argerami2}, for $J=\bigoplus_{k=1}^{\infty} J_{m_k}(\lambda) \in B(l^2(\N))$ with $m
:= \sup\{m_k: k \in \N\} <\infty $, $$C^*_e(\mO\mS(J))= M_m(\C).$$
Thus, $\mO\mS(J)$ and $\mO\mS(J) \ot_{\min=\mathrm{c}} \mS_2$ embed into $\mO_2$ and $C^*_e(\mO\mS(J) \ot_{\min=\mathrm{c}} \mS_2) \cong \mO_2$ using \Cref{prop1} and \Cref{cor2}.}
\end{example}

\begin{example}
\emph{If $J=\bigoplus_{k=1}^{n}( J_{m_k}({\lambda_{k}}) \ot I_{d_k})$, with
$\lambda_1 > \lambda_2 > \cdots > \lambda_n$ all real with $\max\{m_2,
\ldots, m_{n-1}\} \leq \min\{m_1, m_n\}$, then, by \cite[Corollary 2.12]{argerami2}},
  \\
\emph{$C^*_e(\mO\mS(J))$ is a nuclear, simple, separable $C^*$-algebra, for the cases $ \; m_1 = 1, m_n \geq 2, |\lambda_1 -\lambda_n| \leq \cos \frac{\pi}{(m_n+1)}$ and $ m_1 \geq 2, m_n = 1 , |\lambda_1 -\lambda_n| \leq \cos \frac{\pi}{(m_1+1)}.$}\\ \emph{Therefore, for these cases $\mO\mS(J)$ and $\mO\mS(J) \ot_{\min} \mS_2$ embed into $\mO_2$ and $C^*_e(\mO\mS(J) \ot_{\min} \mS_2) \cong \mO_2.$}
\end{example}

\begin{example} 
\emph{Using \Cref{cor3} and \Cref{cuntz}; $C^*_e(\mO_{n} \ot_{\min=\mathrm{max}} \mS_{\infty}) \cong \mO_n$ for all $2 \leq n \leq \infty.$}
\end{example}

\subsection*{Acknowledgements}
{\small The authors are grateful to Ved Prakash Gupta for his careful reading of the manuscript and suggestions which led to many improvements. The authors would also like to
  thank the referee for valuable comments and feedback.}

\bibliographystyle{plain}
\bibliography{Luthra_kumar_Emb_C_Env_Exact_Op_Sys_Ref}

\end{document}